
\documentclass[11pt]{article}
\usepackage[a4paper,top=3cm,bottom=3.5cm,left=2.5cm,right=2.5cm,marginparwidth=1.75cm]{geometry}
\usepackage[english]{babel}
\usepackage[utf8x]{inputenc}
\usepackage[T1]{fontenc}
\usepackage{graphicx}
\usepackage{caption, subcaption}
\usepackage[title]{appendix}
\usepackage{tikz}
\usepackage{pgfplots}
\usepackage[super]{nth}
\usepackage{cutwin}
\graphicspath{ {images/} }
\usetikzlibrary{intersections,decorations.pathreplacing,decorations.markings,calc,angles,quotes,arrows.meta,pgfplots.fillbetween,patterns}

\usepackage{amsmath,yhmath}
\usepackage{amsthm}
\usepackage{amssymb}
\usepackage{hyperref}
\usepackage{cleveref}
\usepackage{epstopdf}
\usepackage{comment}
\usepackage{todonotes}
\usepackage{color}
\usepackage{float}
\usepackage[sort&compress,numbers]{natbib}
\captionsetup{font={small,it}}

\newtheorem{thm}{Theorem}
\newtheorem{lem}[thm]{Lemma}

\theoremstyle{definition}
\newtheorem{defn}[thm]{Definition}
\newtheorem{example}[thm]{Example}

\newtheorem{rmk}[thm]{Remark}
\numberwithin{equation}{section}
\numberwithin{thm}{section}

\newcommand{\N}{\mathbb{N}}

\newcommand{\R}{\mathbb{R}}

\renewcommand{\S}{\mathcal{S}}

\newcommand{\Z}{\mathbb{Z}}

\newcommand{\p}{\partial}
\newcommand{\dx}{\mathrm{d}}

\newcommand{\nm}{\noalign{\smallskip}}
\newcommand{\ds}{\displaystyle}
\newcommand{\iu}{\mathrm{i}\mkern1mu}

\makeatletter
\newcommand{\neutralize}[1]{\expandafter\let\csname c@#1\endcsname\count@}
\makeatother
\title{Unidirectional edge modes in time-modulated metamaterials\thanks{\footnotesize
This work was supported in part by the Swiss National Science Foundation grant number
200021--200307.}}

\author{
Habib Ammari\thanks{\footnotesize Department of Mathematics, 
	ETH Z\"urich, 
	R\"amistrasse 101, CH-8092 Zurich, Switzerland (habib.ammari@math.ethz.ch, jinghao.cao@sam.math.ethz.ch).} \and Jinghao Cao\footnotemark[2]}

\date{}

\begin{document}
	\maketitle
	
	\begin{abstract}
	We prove the possibility of achieving unidirectional edge modes in time-modulated supercell structures. Such finite structures consist of two trimers repeated periodically. Because of their symmetry, they admit degenerate edge eigenspaces. When the trimers are time-modulated with two opposite orientations, 
	the degenerate eigenspace splits into two one-dimensional eigenspaces described by an analytical formula, each corresponds to a mode which is localized at one edge of the structure. Our results on the localization and stability of these edge modes with respect to fluctuations in the time-modulation amplitude are illustrated by several numerical simulations. 
	\end{abstract}

\noindent{\textbf{Mathematics Subject Classification (MSC2000):} 35J05, 35C20, 35P20, 74J20
		
\vspace{0.2cm}
		
\noindent{\textbf{Keywords:}} time-modulated metamaterial, valley Hall effect, unidirectional edge mode, supercell structure, artificial spin effect}
	\vspace{0.5cm}

 
\section{Introduction}

The work presented in this paper aims at contributing to the mathematical and numerical analysis of wave manipulation in metamaterials. Metamaterials are made of  high-contrast subwavelength resonant unit cells, each of them is interacting strongly with low-frequency incoming waves due to subwavelength resonances \cite{lemoult2016soda,yves2017crytalline,
phononic1,phononic2}. Their study is motivated by a number of potential technologically relevant applications concerning wave manipulation at deep subwavelength scales such as subwavelength confinement and guiding of waves and superresolution sensing \cite{ammari2017subwavelength,yves2017crystalline,yves2017topological,wang2019subwavelength}. 
It has led to the discovery of classical wave analogies of important behaviors observed in condensed matter physics. Typical examples include analogies to topologically protected edge modes, unidirectional transport phenomenon, exceptional points and Dirac degeneracies, and the quantum valley Hall effect  \cite{ammari2021functional,review,review2,topo1}.

Recently, many new fundamental mathematical results in the theory of wave propagation in complex resonant structures have been obtained and  efficient and  sophisticated boundary element techniques for modeling wave control and manipulation in metamaterials have been designed, see e.g. the review paper \cite{ammari2021functional} and the dissertations \cite{erik_thesis,bryn_thesis}.
   
The  quantum analogue of high-contrast subwavelength metamaterials (in condensed matter physics) is  graphene, which is a single atomic layer of carbon atoms arranged in a honeycomb structure. While Fefferman, Weinstein and their collaborators \cite{drouot2, drouot1,fefferman,fefferman2,fefferman3,fefferman4, lee-thorp,drouot_valley} proved fundamental mathematical results on the graphene model, little was known  on the classical analogue (in wave physics) of this. This is due to many fundamental differences in the mathematical treatments of the classical and quantum problems. The results obtained on graphene are all based on the use of the tight-bending model for Schr\"odinger operators (which is a discrete approximation of the continuous model) together with the nearest-neighbour
approximation, which leads to tridiagonal matrix approximations to Schr\"odinger operators. A main consequence of this is that chiral symmetry holds, which makes the proof of existence and the characterization of the topologically protected edge modes (i.e., localized modes that are robust with respect to random changes in the geometry and/or  material parameters of the system) tractable. 

In wave physics, due to the strong interactions between the
subwavelength resonators (or in other terms because of the slow decay of the solutions to the corresponding partial differential equations), there are fundamental differences compared to the quantum case in the analysis of the model problems, in particular their topological properties.
Moreover, analytical and numerical schemes for finding the properties of high-contrast subwavelength metamaterials would lead to incorrect modeling if there are based on the tight-bending model \cite{fiorani}. Nevertheless, as shown in \cite{ammari2021functional,ammari2017double,bryn_thesis,erik_thesis}, a discrete approximation of the corresponding partial differential equations  does exist. It is in terms of generalized capacitance matrices. The strength of such novel approximate formulation is
that the capacitance matrix accounts for these strong interactions, thereby providing an efficient numerical approach and a unified mathematical
model to study challenging problems in subwavelength wave physics. This approach allows to establish for the first time a complete mathematical and numerical framework for metamaterials: (i) a discrete approximation for computing subwavelength resonances in both finite and periodic systems of subwavelength resonators has been introduced \cite{davies2019fully,ammari2021functional,ammari2017double}; (ii) the existence of Dirac singularities was proved \cite{ammari2020honeycomb,ammari2020highfrequency}; (iii) the role that Dirac points play in the origin of robust (topologically protected) edge states has been explored \cite{ammari2020topological}, and (iv) it has been shown that the introduction of time-modulations into systems of subwavelength resonators can shift the Dirac singularities to the origin of the Brillouin zone \cite{ammari2020time}. This approach has been also extended to the analysis of exceptional points, non-hermitian systems of subwavelength resonators and their applications in sensing at subwavelength scales \cite{ammari2020highorder,ammari2020exceptional,ammari2020edge}. 

To the best of our knowledge,  the mathematical and numerical analysis of wave propagation properties of time-modulated metamaterials has just been started.  
In \cite{ammari2020time}, a discrete characterization of the
band structure in time-modulated metamaterials is introduced. This characterization provides both theoretical insight and an efficient numerical
method to compute the dispersion relationship of time-modulated systems of subwavelength resonators. 
A study of exceptional points in the case of time-modulated  metamaterials is presented in 
\cite{TheaThesis}. Furthermore, in \cite{paper1} the possibility of achieving non-reciprocal wave propagation in time-modulated metamaterials is shown. Finally, in \cite{Thea2}, the question whether an analogous principle as the bulk-boundary correspondence of quantum systems is possible in the case of  time-modulated metamaterials is discussed. 

In the present paper, we are concerned with the mathematical foundation of the (classical) analogue of the  valley Hall effect \cite{valley1, valley2, valley3, valley4, valley6, valley7}  for lattices of time-modulated subwavelength resonators. 
We consider supercell structures where phase-shifted (``rotation like'') time-modulations of subwavelength resonators can provide a kind of ``artificial spin''. 
We prove that the valley Hall effect occurs in truncated  supercell lattices of time-modulated subwavelength resonators by opening non-reciprocal band gaps at degenerate points. We give evidence that unidirectional edge localization phenomena are not particular to quantum systems, as conjectured in the seminal papers \cite{haldane2,haldane}.  For doing so, we start by showing that the capacitance matrix associated with a supercell structure has a degenerate eigenspace of dimension two. The associated eigenvectors correspond to two edge modes of the unmodulated structure. Then we consider a time-modulated structure consisting of consecutive trimers  modulated in opposite orientations to replicate spin effects from quantum systems. As a consequence of this time-modulation which breaks the time-reversal symmetry of the structure, the degenerate eigenspace splits into two one-dimensional spaces, each corresponds to a mode localized at one of the edges of the modulated structure. We also verify numerically that these two unidirectional edge modes are stable with respect to fluctuations in the modulation amplitude. 
It is worth emphasizing that in  \cite{ammari2020highfrequency,ammari2020honeycomb}, it is proved that honeycomb lattices of (unmodulated) subwavelength  resonators support degenerate singularities of Dirac type.  This degeneracy holds also for the chains of supercells considered in this paper. By breaking of time-reversal symmetry by modulating the resonators, we show that valley Hall effect can be realized in supercell structures. In other words, such spatiotemporally modulated structures enable then unidirectional edge modes and posses unique features in terms of wave localization. Our results in this paper also allow to envision the mathematical foundation of unidirectional wave guiding phenomena in (biperiodic) screens of space-time modulated subwavelength resonator systems.

This paper is organized as follows. In Section 2, we formulate the problem of reciprocity and review the Floquet-Bloch theory which is essential to solve ordinary differential equations with periodic coefficients. In Section 3, we provide an approximation of the time-dependent Bloch modes inside the resonators. Using this approximation formula in the unmodulated case, we reproduce the  numerical results of \cite{ammari2020topological} on the existence and localization of a topologically protected edge mode in a Su-Schrieffer-Heeger (SSH) chain of subwavelength resonators. 
Section 4 is devoted to the localization of edge modes in a supercell structure. We first determine algebraically the corresponding capacitance matrix. Then, considering the dilute regime, we show the approximate degeneracy of at least one of the eigenvalues of the capacitance matrix. Such a degenerate eigenvalue yields a degenerate edge eigenspace for the supercell structure. The edge modes are localized both at the left and right edges of the structure. Then by time modulating the supercell structure in such a way that the two adjacent trimers have different orientations, we show both analytically and numerically that the degenerate eigenspace splits into two one-dimensional eigenspaces, each corresponds to an edge mode localized either at the left or at the right of the structure.  We also illustrate numerically the stability of such unidirectional edge modes  
with respect to random  fluctuations of the time-modulation amplitude.

\section{Problem formulation and preliminary theory}	
In this section, we formulate the problem of study. Moreover, we introduce the Floquet-Bloch theory for periodic differential equations. This subsection follows closely the introductory theory provided in \cite{ammari2020time}.

\subsection{Resonator structures and the wave equation}
\label{sec:formulation}
We consider the wave equation in structures with time-modulated materials. Such wave equation can be used to model acoustic and polarized electromagnetic waves.
The time dependent material parameters are given by $\rho(x,t)$ and $\kappa(x,t)$. 
In acoustics, $\rho$ and $\kappa$ represent the density and the bulk modulus of the materials. We study the wave equation with time-dependent coefficients in dimension three:
\begin{equation}
\label{waveequation}
\left(\frac{\partial}{\partial t }\frac{1}{\kappa(x,t)}\frac{\partial}{\partial t}-\nabla\cdot\frac{1}{\rho(x,t)}\nabla\right)u(x,t)=0,\ \ x\in \mathbb{R}^3,t\in\mathbb{R}.	
\end{equation}

Let $D$ denote a system of $N$ resonators. $D$ is constituted by $N$ disjoint domains $D_i$ for $i=1,\ldots,N$, each $D_i$ being connected and having boundary of H\"older class $\p D_i \in C^{1,s}, 0 < s < 1$.

For the purpose of this paper, we apply time-modulations to the interior of the resonators, while the surrounding material is constant in $t$. We let
\begin{equation}
	\label{modulation}
	\kappa(x,t)=
\begin{cases}
	\kappa_0, \ & x\in\mathbb{R}^3\backslash \overline{D}\\
	\kappa_r\kappa_i(t),\ & x\in D_i,
\end{cases} 
,\ \ 
	\rho(x,t)=
\begin{cases}
	\rho_0, \ & x\in\mathbb{R}^3\backslash \overline{D},\\
	\rho_r\rho_i(t),\ & x\in D_i,
\end{cases},
\end{equation}
for $i=1,\ldots, N$. Here, $\rho_0$, $\kappa_0$, $\rho_r$, and $\kappa_r$ are positive constants. The functions $\rho_i(t) \in \mathcal{C}^0(\mathbb{R})$ and $\kappa_i(t) \in \mathcal{C}^1(\mathbb{R})$ describe the modulation inside the $i^{\text{th}}$ resonator $\mathcal{C}_i$. We assume that each of the functions $\rho_i$ and $\kappa_i$ is periodic with period $T$.

We define the contrast parameter $\delta$ as 
$$
\delta := \frac{\rho_r}{\rho_0}.
$$
In (\ref{waveequation}), we have the transmission conditions at $x\in \p D_i$
$$u \big|_+ = u\big|_- \quad \mbox{and} \quad \delta \frac{\partial {u}}{\partial \nu} \bigg|_{+} - \frac{1}{\rho_i(t)}\frac{\partial {u}}{\partial \nu} \bigg|_{-} = 0, \qquad x\in \p D_i, \ t\in \R,$$
where $\partial/\partial \nu$ is the outward normal derivative at $\p D_i$ and 
the subscripts $+$ and $-$ denote taking the limit from outside and inside $D_i$, respectively.

In order to achieve subwavelength resonances, we assume that $\delta \ll 1$ and consider the regime where the modulation frequency $$\Omega := \frac{2\pi}{T} = O(\delta^{1/2}).$$ 
We also assume that $\dx \kappa_i/\dx t= O(\delta^{1/2})$ for $i=1,\ldots,N.$

Note that in the static case where there is no modulation of the material parameters (i.e., when $\rho_i(t)=\kappa_i(t)=1$ for all $i$), the system of $N$ subwavelength resonators has $N$ {\em subwavelength frequencies} of order of $O(\delta^{1/2})$. We refer the reader to \cite{davies2019fully,ammari2021functional} for the details.

\subsection{Capacitance matrix}
We first introduce the fundamental solution to the Laplacian 
$$
G(x,y):= - \frac{1}{4 \pi |x-y|} \quad \mbox{for } x \neq y. 
$$
Let $D\subset \R^3$ be as in Section \ref{sec:formulation}. We define the single-layer potential $\mathcal{S}_D: L^2(\partial D) \rightarrow H_{\textrm{loc}}^1(\R^3)$ by
$$\mathcal{S}_D[\phi](x) := \int_{\partial D} G(x,y) \phi(y) \dx\sigma(y),\quad x\in \mathbb{R}^3.$$
Here, the space $H_{\textrm{loc}}^1(\R^3)$ consists of functions that are square integrable and with a square integrable weak first derivative on every compact subset of $\R^3$. Taking the trace on $\p D$, it is well-known that $\S_D: L^2(\p D) \rightarrow H^1(\p D)$ is invertible \cite{MaCMiPaP}. 

\begin{defn}[Capacitance matrix]
	The capacitance coefficients $C_{i,j}$ are defined as
\begin{equation}\label{eq:psiC}
	\psi_i = \left(\S_D \right)^{-1}[\chi_{\p D_i}], \qquad C_{i,j}= -\int_{\p D_i} \psi_j  \dx \sigma,
\end{equation}
for $i,j=1,\ldots,N$, where $\chi_{\p D_i}$ is the characteristic function of $\partial D_i$. The capacitance matrix $C$ is defined as the matrix $C = \left(
C_{i,j}\right)_{i,j=1}^N$.
\end{defn}

\subsection{Floquet-Bloch theory and asymptotic Floquet matrix elements}
\label{sec:Floquettheory}
	Let $A(t)$ be a $T$-periodic $N\times N$ complex matrix function and consider the following ordinary differential equation (ODE):
	\begin{equation}
	\label{ode}
		\frac{\dx x}{\dx t}(t) =A(t)x(t).
	\end{equation}
Recall that the fundamental solution matrix of (\ref{ode}) is a $N\times N$ matrix with linearly independent column vectors, which solves (\ref{ode}). The following theorem is classical (see, for instance, \cite{teschl2012ordinary}). 
\begin{thm}
	(Floquet's theorem) Denote $X(t)$ the matrix-valued fundamental solution with initial value $X(0)=\mathrm{Id}_N$, where $\mathrm{Id}_N$ is the $N\times N$ identity matrix. There exists a constant matrix $F$ and a $T$-periodic matrix function $P(t)$ such that
	\begin{equation}
	\label{floquetthm}
	X(t)= P(t) e^{Ft}.
	\end{equation}
\end{thm}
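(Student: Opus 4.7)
The plan is to follow the classical monodromy argument. First, I exploit the $T$-periodicity of $A$: setting $Y(t) := X(t+T)$ and differentiating gives $\frac{\dx Y}{\dx t}(t) = A(t+T)X(t+T) = A(t)Y(t)$, so $Y$ satisfies the same ODE as $X$. Because $X(t)$ is a fundamental solution matrix, it is invertible for every $t$, and hence there exists a constant invertible matrix $M$ with $Y(t) = X(t)M$ for all $t$. Evaluating this identity at $t=0$ and using $X(0)=\mathrm{Id}_N$ identifies $M = X(T)$, the so-called monodromy matrix.

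Next, I choose a constant matrix $F$ with $e^{FT} = M$. Such a matrix logarithm exists over $\mathbb{C}$ because $M$ is invertible: reduce $M$ to Jordan normal form and construct the logarithm block-by-block, writing each block as $\lambda\,\mathrm{Id} + K$ with $\lambda \neq 0$ and $K$ nilpotent, and taking $\log(\lambda)\,\mathrm{Id} + \sum_{k\geq 1}(-1)^{k+1}K^k/(k\lambda^k)$, which is a finite sum. I then define $P(t) := X(t)e^{-Ft}$, which is smooth and invertible, and verify periodicity directly:
\begin{equation*}
P(t+T) = X(t+T)e^{-F(t+T)} = X(t)\,M e^{-FT} e^{-Ft} = X(t)e^{-Ft} = P(t).
\end{equation*}
Rearranging gives the claimed decomposition $X(t) = P(t)e^{Ft}$.

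The main obstacle is the existence of the logarithm $F$, which is the only nontrivial analytic ingredient; once it is available, the remainder is bookkeeping. One subtlety worth flagging is that $F$ is, in general, complex-valued even when $A(t)$ is real, since a real invertible matrix need not admit a real logarithm (consider a block with a single negative eigenvalue); for the present paper, complex coefficients are admissible, so this does not cause any issue. If one insisted on a real decomposition in the real case, one would instead work with the doubled period $2T$, since $M^2$ always admits a real logarithm.
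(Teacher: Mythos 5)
Your proof is correct and complete: the monodromy argument, the existence of a complex matrix logarithm of the invertible matrix $X(T)$ via Jordan blocks, and the direct verification that $P(t):=X(t)e^{-Ft}$ is $T$-periodic are exactly the standard ingredients. The paper itself offers no proof---it cites the result as classical (referring to \cite{teschl2012ordinary})---and your argument is precisely the textbook proof that reference contains, including the worthwhile remark that $F$ may need to be complex (or the period doubled) when $A(t)$ is real.
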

\noindent
For each eigenvalue $\lambda := e^{\mathrm{i}\omega}$ of $e^{F}$, there is a Bloch solution $x(t)$ which is $\omega$-quasiperiodic, i.e., it satisfies
\begin{equation*}
	x(t+T)=e^{\iu\omega T}x(t).
\end{equation*}
Observe that $\omega$ is defined modulo  the modulation frequency $\Omega$. Therefore, we define the time-Brillouin zone as $Y_t^*:=\mathbb{C}/(\Omega\mathbb{Z})$.

\begin{rmk}
	In some literature, e.g. \cite{Yakubovich}, $e^{\mathrm{i}\omega T}$ is called  a characteristic multiplier. Here, we refer to $\omega$ as a quasifrequency and call $\iu \omega$ a Floquet exponent.
\end{rmk}

If the matrix $A$ is time-independent, then the solution to \eqref{ode} can be written as $x(t)=e^{At}x(0)$. The Floquet exponents are then given by the eigenvalues of $A$. Since the Floquet exponents are defined modulo $\iu\Omega$, we need the following definition.
\begin{defn}[Folding number]
\label{foldingnumber}
	Let $\omega_A$ be the imaginary part of an eigenvalue of the time independent matrix $A$. Then we can uniquely write $\omega_A=\omega_0+m\Omega$, where $\omega_0\in [-\Omega/2,\Omega/2)$. The integer $m$ is called the folding number.
\end{defn}
\begin{rmk}
    In other words, if $A$ is diagonal and constant in time, the solution matrix can be written as 
    \begin{equation*}
        X(t)=e^{At} = e^{Ft}.
    \end{equation*} 
    We choose the imaginary part of $F$ in such a way that $\mathrm{Im}(F_{l,l})\in[-\Omega/2,\Omega/2)$ for $l=1,\ldots, N$, where the $F_{l,l}$'s are the diagonal entries of $F$. 
\end{rmk}

In \cite{TheaThesis}, it is shown that if $A(t)$ is an analytical function of some parameter $\varepsilon$ at $\varepsilon=0$ and has the following expansion:
\begin{equation*}
    A(t)=A_0 +\varepsilon A_1(t) + \varepsilon A_2(t)+ \ldots,
\end{equation*}
where $A_0$ is constant in $t$ and diagonal and the series is convergent for $|\varepsilon| < r_0$ with $r_0>0$ being independent of $t$,  
then the Floquet matrix $F$ is also an analytic function of $\varepsilon$ at $\varepsilon=0$ and can be extended as follows: 
\begin{equation} \label{exp2.8}
    F=F_0+\varepsilon F_1 +\varepsilon^2 F_2 + \ldots.
\end{equation} 
Moreover, the matrix elements of $F_1$ and some entries of $F_2$ are given by the following theorem.
\begin{thm}
\label{thm:expansionFloquet}
Assume that $A_0$ is constant in $t$ and diagonal. Then the first-order term in  the expansion (\ref{exp2.8}) of the Floquet matrix $F$ is given by 
\begin{equation}
\label{eq:firstorderFloquetmatrix}
\left(F_{1}\right)_{k,l}= \begin{cases}\left(A_{1}^{n_{k}-n_{l}}\right)_{k,l} & \text { if }\left(F_{0}\right)_{k,k}=\left(F_{0}\right)_{l,l}, \\ 
\nm
\ds \left(\left(F_{0}\right)_{l,l}-\left(F_{0}\right)_{k,k}\right) \sum_{m \in \mathbb{Z}} \frac{\left(A_{1}^{m}\right)_{k,l}}{\frac{2 \pi \mathrm{i}}{T} m+\left(A_{0}\right)_{l,l}-\left(A_{0}\right)_{k,k}} & \text { if }\left(F_{0}\right)_{k,k} \neq\left(F_{0}\right)_{l,l}.\end{cases}    
\end{equation}
Furthermore,  if $(F_0)_{k,k}=(F_0)_{l,l}$, then  the entries  $(F_2)_{k,l}$ of  the second-order term in (\ref{exp2.8}) read:
\begin{equation}
\label{eq:secondorderFloquetmatrix}
\begin{split}
\left(F_{2}\right)_{k,l}=&\sum_{j=1}^{N} \sum_{m \neq n_{j}-n_{l}} \frac{\left(A_{1}^{n_{k}-n_{l}-m}-A_{1}^{n_{k}-n_{j}}\right)_{k,j}\left(A_{1}^{m}\right)_{j,l}}{\frac{2 \pi \mathrm{i}}{T} m+\left(A_{0}\right)_{l l}-\left(A_{0}\right)_{j,j}}\\
&+\sum_{j=1}^{N} \sum_{m \neq n_{k}-n_{j}} \frac{\left(A_{1}^{m}\right)_{k,j}\left(F_{1}\right)_{j,l}}{\frac{2 \pi \mathrm{i}}{T} m+\left(A_{0}\right)_{j,j}-\left(A_{0}\right)_{k,k}}+\left(A_{2}^{n_{k}-n_{l}}\right)_{k,l}.    
\end{split}
\end{equation}
Here, $n_k$ denotes the folding number of the $k$th eigenvalue of $A_0$ in the sense of Definition \ref{foldingnumber}. The superscripts of $A_1$ and $A_2$ represent the corresponding Fourier coefficients in $t$. For example, $A_1^m$ refers to the $m$th Fourier coefficients of $A_1^m(t)$ in $t$. 
\end{thm}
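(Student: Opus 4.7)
The plan is to substitute the Floquet ansatz $X(t)=P(t)e^{Ft}$, with $P$ being $T$-periodic and normalized by $P(0)=\mathrm{Id}_N$, into the ODE $\dot X=A(t)X$ to obtain the master equation
\begin{equation*}
\dot P=AP-PF .
\end{equation*}
Because $A_0$ is constant and diagonal, the zeroth-order solution is $P_0(t)=\exp(\iu\Omega\,\mathrm{diag}(n_1,\ldots,n_N)\,t)$ and $F_0=A_0-\iu\Omega\,\mathrm{diag}(n_1,\ldots,n_N)$, so that $P_0(t)e^{F_0 t}=e^{A_0 t}$ and the folding convention $\mathrm{Im}((F_0)_{l,l})\in[-\Omega/2,\Omega/2)$ is respected. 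Granted the analyticity in $\varepsilon$ recalled before the statement, I match powers of $\varepsilon$ in the master equation subject to the two conditions that each $P_k$ be $T$-periodic and that $P_k(0)=0$ for $k\ge 1$.

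At first order the master equation reads
\begin{equation*}
\dot P_1-A_0 P_1+P_1 F_0=A_1 P_0-P_0 F_1 .
\end{equation*}
Taking the $(k,l)$ entry (so that $A_0,F_0$ act as scalars) and using that $P_0$ multiplies by $e^{\iu\Omega n_l t}$ on the right and by $e^{\iu\Omega n_k t}$ on the left, I expand every $t$-dependent quantity in a Fourier series in $t$ and match the $m$-th mode, obtaining
\begin{equation*}
\bigl(\iu m'\Omega+(A_0)_{l,l}-(A_0)_{k,k}\bigr)(P_1^{m'+n_l})_{k,l}=(A_1^{m'})_{k,l}-\delta_{m',\,n_k-n_l}(F_1)_{k,l},\qquad m':=m-n_l.
\end{equation*}
The prefactor on the left vanishes exactly when $m'=n_k-n_l$ \emph{and} $(F_0)_{k,k}=(F_0)_{l,l}$. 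In that resonant case the relation forces $(F_1)_{k,l}=(A_1^{n_k-n_l})_{k,l}$, while the Fourier coefficient $(P_1^{n_k})_{k,l}$ is undetermined by the ODE and will be pinned down later by $P_1(0)=0$. In the non-resonant case every mode is invertible; each $(P_1^{m'+n_l})_{k,l}$ is a known rational function of $(F_1)_{k,l}$, and enforcing $\sum_{m}(P_1^m)_{k,l}=0$ extracts precisely the series expression \eqref{eq:firstorderFloquetmatrix}.

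The second-order formula follows from the same mechanism applied to
\begin{equation*}
\dot P_2-A_0 P_2+P_2 F_0=A_2 P_0-P_0 F_2+A_1 P_1-P_1 F_1 .
\end{equation*}
On the diagonal block $(F_0)_{k,k}=(F_0)_{l,l}$, the resonant Fourier mode $m=n_k$ of the $(k,l)$ entry has a vanishing prefactor on the left, so all right-hand contributions at that mode must cancel. Substituting the explicit Fourier coefficients of $P_1$ computed in the previous step into $A_1P_1$ yields the first double sum of \eqref{eq:secondorderFloquetmatrix}, the product $P_1 F_1$ produces the second double sum, and $A_2 P_0$ gives the remaining term $(A_2^{n_k-n_l})_{k,l}$; their sum is $(F_2)_{k,l}$.

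The hard part is the bookkeeping of folding numbers together with Fourier indices: one must keep straight how multiplication by $P_0=\exp(\iu\Omega\,\mathrm{diag}(n_k)t)$ shifts the relevant Fourier mode (this is what produces the superscripts $n_k-n_l$ and $n_k-n_l-m$ appearing in the statement) and identify exactly when the ``energy denominator'' $\iu m\Omega+(A_0)_{l,l}-(A_0)_{k,k}$ vanishes, which — thanks to the convention $\mathrm{Im}((F_0)_{l,l})\in[-\Omega/2,\Omega/2)$ — happens only at $m=n_k-n_l$ under the hypothesis $(F_0)_{k,k}=(F_0)_{l,l}$. Once this is disentangled, the derivation reduces to linear algebra in the Fourier basis together with the evaluation of the initial condition $P_k(0)=0$.
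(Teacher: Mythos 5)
Your derivation is correct, and it is the standard route to this result; note that the paper itself does not prove Theorem \ref{thm:expansionFloquet} but quotes it from the cited thesis, so there is no in-paper proof to compare against. Your first-order analysis is complete: the master equation $\dot P = AP-PF$, the mode-matching identity, the identification of the resonance at $m'=n_k-n_l$ precisely when $(F_0)_{k,k}=(F_0)_{l,l}$, and the use of $P_1(0)=0$ to extract the non-resonant formula all check out and reproduce \eqref{eq:firstorderFloquetmatrix} exactly (the resonant denominator $\iu(n_k-n_l)\Omega+(A_0)_{l,l}-(A_0)_{k,k}$ equals $(F_0)_{l,l}-(F_0)_{k,k}$, which is what turns the solvability condition into the stated prefactor). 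The only place where your sketch hides real work is the second order: the subtracted term $A_1^{n_k-n_j}$ in the first double sum of \eqref{eq:secondorderFloquetmatrix} does \emph{not} come from the generic Fourier coefficients of $P_1$ alone, but from the resonant coefficient $(P_1^{n_j})_{j,l}$ — either the free constant pinned down by $P_1(0)=0$ (when $(F_0)_{j,j}=(F_0)_{l,l}$) or the coefficient carrying $(F_1)_{j,l}$ (otherwise) — and one should also observe that the excluded term $m=n_j-n_l$ has identically vanishing numerator in the non-degenerate case, so the restriction $m\neq n_j-n_l$ is consistent across both cases. Carrying out that bookkeeping confirms your attribution of the three pieces of $(F_2)_{k,l}$ to $A_1P_1$, $P_1F_1$ and $A_2P_0$, so the argument closes as you claim.
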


\subsection{Time-modulated subwavelength resonators}
Seeking quasiperiodic solutions in $t$, we obtain the differential problem
 \begin{equation} \label{eq:wave_transf}
 	\begin{cases}\ \ds \left(\frac{\p }{\p t } \frac{1}{\kappa(x,t)} \frac{\p}{\p t} - \nabla \cdot \frac{1}{\rho(x,t)} \nabla\right) u(x,t) = 0,\\[0.3em]
 		\	u(x,t)e^{-\iu \omega t} \text{ is $T$-periodic in $t$}. 
 	\end{cases}
 \end{equation} 
We seek $\omega\in Y_t^*$ such that there is a non-zero solution $u$ to (\ref{eq:wave_transf}), where the time-modulations are given by  (\ref{modulation}).

Since $e^{-\mathrm{i}\omega t}u(x,t)$ is a $T$-periodic function of $t$, we can write its Fourier series as
$$u(x,t)= e^{\iu \omega t}\sum_{n = -\infty}^\infty v_n(x)e^{\iu n\Omega t}.$$
In the frequency domain, we then have from (\ref{eq:wave_transf}) the following equation, for $n\in \Z$:
\begin{equation} \label{eq:freq}
	\left\{
	\begin{array} {ll}
		\ds \Delta {v_n}+ \frac{\rho_0(\omega+n\Omega)^2}{\kappa_0} {v_n}  = 0 & \text{in } \R^3 \setminus \overline{D}, \\[0.3em]
		\ds \Delta v_{i,n}^* +\frac{\rho_r(\omega+n\Omega)^2}{\kappa_r} v_{i,n}^{**}  = 0 & \text{in } D_i, \\
		\nm
		\ds  {v_n}|_{+} -{v_n}|_{-}  = 0  & \text{on } \partial D, \\
		\nm
		\ds  \delta \frac{\partial {v_n}}{\partial \nu} \bigg|_{+} - \frac{\partial v_{i,n}^* }{\partial \nu} \bigg|_{-} = 0 & \text{on } \partial D_i. \\[0.3em]
	\end{array}
	\right.
\end{equation}
Here, for $i=1,\ldots, N$, $v_{i,n}^*(x)$ and $v_{i,n}^{**}(x)$ are defined through the convolutions
$$v_{i,n}^*(x) = \sum_{m = -\infty}^\infty r_{i,m} v_{n-m}(x), \quad  v_{i,n}^{**}(x) = \frac{1}{\omega+n\Omega}\sum_{m = -\infty}^\infty k_{i,m}\big(\omega+(n-m)\Omega\big)v_{n-m}(x),$$
where $r_{i,m}$ and $k_{i,m}$ are the Fourier series coefficients of $1/\rho_i$ and $1/\kappa_i$, respectively:
$$\frac{1}{\rho_i(t)} = \sum_{n = -\infty}^\infty r_{i,n} e^{\iu n \Omega t}, \quad \frac{1}{\kappa_i(t)} = \sum_{n = -\infty}^\infty k_{i,n} e^{\iu n \Omega t}.$$
We can assume that the solution is normalized as $\|v_0\|_{H^1(Y)} = 1$. Since $u$ is continuously differentiable in $t$, we then have as $n\to \infty$,
\begin{equation} \label{eq:reg_v}
	\|v_n\|_{H^1(Y)} = o\left(\frac{1}{n}\right).
\end{equation}

We will consider the case when the time-modulations of $\rho$ and $\kappa$ consist of a finite Fourier series with a large number of nonzero Fourier coefficients: 
$$\frac{1}{\rho_i(t)} = \sum_{n = -M}^M r_{i,n} e^{\iu n \Omega t}, \qquad \frac{1}{\kappa_i(t)} = \sum_{n = -M}^M k_{i,n} e^{\iu n \Omega t},$$
for some fixed $M\in \N$. We seek subwavelength quasifrequencies $\omega$ of the wave equation \eqref{eq:wave_transf} in the sense of the following definition introduced in \cite{ammari2020time}.
 \begin{defn}[Subwavelength quasifrequency] \label{def:sub}
 		A quasifrequency $\omega = \omega(\delta) \in Y^*_t$ of \eqref{eq:wave_transf} is said to be a \emph{subwavelength quasifrequency} if there is a corresponding Bloch solution $u(x,t)$, depending continuously on $\delta$, which is essentially supported in the low-frequency regime, i.e., it can be written as
 		$$u(x,t)= e^{\iu \omega t}\sum_{n = -\infty}^\infty v_n(x)e^{\iu n\Omega t},$$
 		where, as $\delta \to 0$, 
 		$$\omega \rightarrow 0 \quad \mbox{and} \quad \sum_{n = -\infty}^\infty \|v_n\|_{L^2(Y)} = \sum_{n = -M}^M \|v_n\|_{L^2(Y)} + o(1).$$
 	\end{defn}
One can prove  that the subwavelength quasifrequency $\omega$ and the frequency of modulation $\Omega$ have the same order:
$$\omega = O\left(\delta^{1/2}\right).$$ 	

The following is a capacitance matrix characterization of the subwavelength resonant frequencies  of time-dependent periodic systems of subwavelength resonators. 
\begin{thm}[\cite{ammari2020time}] \label{thm:pre}
		As $\delta \to 0$, the subwavelength quasifrequencies of the wave equation (\ref{eq:wave_transf}) are, to leading order, given by the quasifrequencies of the system of ODEs:
		\begin{equation}
		\label{Hill}
			\frac{\dx^2\Psi}{\dx t^2}(t)+M(t)\Psi(t)=0,	
		\end{equation}
		where $M$ is the matrix defined as
		\begin{equation}
		\label{eq:strutureM}
			M(t)=\frac{\delta\kappa_r}{\rho_r}W_1(t)C W_2(t)+W_3(t)
		\end{equation}
		with $W_1,W_2,$ and $W_3$ being the diagonal matrices with diagonal entries
		\begin{equation}
			(W_1)_{i,i}=\frac{\sqrt{\kappa_i}\rho_i}{\lvert D_i\rvert},\quad (W_2)_{i,i}=\frac{\sqrt{\kappa_i}}{\rho_i},\quad (W_3)_{i,i}=\frac{\sqrt{\kappa_i}}{2}\frac{\dx}{\dx\text{t}}\frac{\dx\kappa_i/\dx t}{\kappa_i^{3/2}}.
		\end{equation}
\end{thm}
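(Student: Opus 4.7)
The plan is to reduce the frequency-domain system \eqref{eq:freq} to the time-domain ODE \eqref{Hill} by combining layer-potential representations with the subwavelength asymptotics $\omega = O(\delta^{1/2})$ and $\Omega = O(\delta^{1/2})$. The guiding observation is that inside each resonator $D_i$ the interior Helmholtz problem degenerates, as $\delta \to 0$, to a Laplace problem, so each Fourier mode $v_{i,n}$ is, to leading order, a constant $c_{i,n}$, and the time-domain unknown $\Psi_i(t) = \sum_n c_{i,n} e^{\iu(\omega + n\Omega)t}$ encodes the value that $u(\cdot,t)$ takes inside $D_i$.

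First I would represent each exterior component $v_n$ through the Helmholtz single-layer potential with wavenumber $k_n = (\omega + n\Omega)\sqrt{\rho_0/\kappa_0} = O(\delta^{1/2})$, use the expansion $\S_D^{k_n} = \S_D + O(\delta^{1/2})$, and impose continuity of $v_n$ across $\p D_i$ to identify the leading-order density as $\sum_i c_{i,n}\psi_i$, with $\psi_i$ the capacitance basis functions from \eqref{eq:psiC}. Integration of the exterior Neumann trace over $\p D_i$, combined with the jump relations for the single-layer potential, then produces the coupling $-\sum_j C_{i,j}c_{j,n}$ at leading order.

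Next I would integrate the flux jump condition in \eqref{eq:freq} on each $\p D_i$. On the interior side the divergence theorem, applied to the interior Helmholtz equation for $v_{i,n}^*$, yields a volume contribution that, after unwinding the convolutions defining $v_{i,n}^*$ and $v_{i,n}^{**}$ through the Fourier coefficients $r_{i,m}$ and $k_{i,m}$ of $1/\rho_i$ and $1/\kappa_i$, becomes $|D_i|$ times a discrete $n$-convolution of $\{c_{i,n}\}$ with those data, weighted by $(\omega + n\Omega)$-factors. Balancing this against the exterior contribution $-\delta \sum_j C_{i,j}c_{j,n}$ and taking the inverse Fourier transform in $n$ pointwise in $t$ turns discrete convolutions into pointwise products, and the factor $(\omega + n\Omega)^2$ into the operator $-\dx^2/\dx t^2$. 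A normalising change of unknown of the form $\Psi_i \propto \sqrt{\kappa_i(t)}\,\times (\text{interior trace})$, chosen precisely so that the chain-rule terms in $\dx\kappa_i/\dx t$ gather into a diagonal potential, delivers the symmetric coupling $W_1(t)CW_2(t)$ together with the potential $W_3(t)$ in \eqref{eq:strutureM}.

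The main technical obstacle is justifying the truncation in $n$ and tracking the $\delta$-orders simultaneously. The regularity estimate \eqref{eq:reg_v} together with the finite-support hypothesis on the Fourier coefficients of $1/\rho_i$ and $1/\kappa_i$ ensures that modes $|n| > M$ contribute only at higher order in $\delta$, so the truncated system closes consistently with Definition \ref{def:sub}. The delicate quantitative step is to show that the $O(\delta^{1/2})$ corrections in the layer-potential expansion combine with the $\delta$-small contrast in the flux jump so that the principal balance produces exactly the overall factor $\delta\kappa_r/\rho_r$ in \eqref{eq:strutureM}, with all remaining contributions being genuinely subleading compared with the ODE dynamics they perturb; this is precisely where the capacitance characterization of subwavelength resonance dictates the algebraic form of $M(t)$.
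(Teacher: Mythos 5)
This theorem is not proved in the paper at all: it is imported verbatim from \cite{ammari2020time}, so there is no in-text argument to compare against. Your sketch nonetheless follows essentially the same route as the derivation in that reference --- layer-potential representation of each Fourier mode with wavenumber $O(\delta^{1/2})$, identification of the leading-order densities with the capacitance basis functions $\psi_i$, integration of the flux-jump condition over $\p D_i$ to produce the coupling $-\delta\sum_j C_{i,j}c_{j,n}$, inverse Fourier synthesis turning the discrete convolutions in $n$ back into the operator $\frac{\dx}{\dx t}\frac{1}{\kappa_i}\frac{\dx}{\dx t}$, and finally the Liouville-type substitution $\Psi_i=(\rho_i/\sqrt{\kappa_i})\,c_i$ that symmetrizes the system into $W_1CW_2$ and generates the potential $W_3$ --- so, as an outline, it is faithful to the actual proof, with the remaining work being the quantitative error tracking you yourself flag.
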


\begin{example}
Throughout this paper, we work with the following physical meaningful modulations of $\rho(t)$ and $\kappa(t)$:
\begin{equation}
    \label{modulation_parameters}
    \begin{cases}
        &\ds \rho_i(t) =\frac{1}{1+\varepsilon \mathrm{cos}(\Omega t+\phi_i)} \, ,\\
        \nm
        &\ds \kappa_i(t)\ \textrm{constant}, 
    \end{cases}
\end{equation}
for $i=1,\ldots, N$, where $\varepsilon$ is the modulation amplitude and 
the $\phi_i$'s are phase shifts. 

Inserting (\ref{modulation_parameters}) into (\ref{eq:strutureM}), we find the entries of the coefficient matrix $M$ in the Hill's equation to be
\begin{equation}
    \label{eq:expansionMij}
    M_{i,j}= (M_0)_{i,j} (1+\varepsilon^k((-1)^{k+1}\mathrm{cos}^{k-1}(\Omega t+\phi_j)(\mathrm{cos}(\Omega t+\phi_i))-\mathrm{cos}(\Omega t+\phi_j))
\end{equation}
with
\begin{equation}
    M_0 := \frac{\delta \kappa_r}{\rho_r}C.
\end{equation}
\end{example}
\section{Time-dependent Bloch modes}
We want to find $\omega = O(\sqrt{\delta})$ for $\delta$ small enough such that there is a non-zero solution $u$ to
\begin{equation}
\left\{\begin{array}{l}
\left(\frac{\partial}{\partial t} \frac{1}{\kappa(x, t)} \frac{\partial}{\partial t}-\nabla \cdot \frac{1}{\rho(x, t)} \nabla\right) u(x, t)=0, \\
\nm
u(x, t) e^{-\mathrm{i} \omega t} \text { is } T \text {-periodic in } t.
\end{array}\right.
\end{equation}
We  refer to $u: \mathbb{R}^3\times \mathbb{R}\rightarrow \mathbb{C}$ as a Bloch eigenmode. Following \cite{ammari2020time}, we expand $u$ in the form 
$$
u(x, t)=e^{\mathrm{i} \omega t} \sum_{n=-\infty}^{\infty} v(x, n) e^{\mathrm{i} n \Omega t}.
$$
We introduce 
$$
V_{i}(n)=\int_{D_i}v(x,n) \mathrm{d}x
\quad \mbox{and} \quad
V_{i}(t)=e^{\mathrm{i}\omega t}\sum_{-\infty}^{\infty}V_i(n)e^{\mathrm{i}n\Omega t}. 
$$
Then, it follows from  \cite{ammari2020time} that 
$$\Psi=\left(\frac{\rho_{i}(t)}{\sqrt{\kappa_{i}(t)}} c_{i}(t)\right)_{i=1}^{N} \quad \mbox{with} \quad  c_i(t)=\frac{V_i(t)}{|D_i|\rho_i(t)}
$$
solves the Hill equation (\ref{Hill}). Moreover, we have that
\begin{equation}
\label{Blochmodes}
\int_{D_i}u(x,t)\mathrm{d}x=V_i(t)=c_i(t)|D_i|\rho_i(t)=\Psi_i(t)\frac{|D_i|}{\sqrt{\kappa_i(t)}}.    
\end{equation}
Furthermore, we can prove that the Bloch mode $u$ is approximately constant in $x$ in each resonators (as $\delta$ goes to zero), i.e., 
\begin{equation}
\label{eq:approxiamtionbloch}
    u(x,t)|_{D_i}\approx \frac{1}{|D_i|}\int_{D_i}u(x,t)\mathrm{d}x.
\end{equation}\\
\begin{thm}
\label{thm:blochapprox}
	In the subwavelength regime, the Bloch eigenmode restricted to the resonators is approximately given by
\begin{equation}
\label{eq:finiteHill}
    u(x,t)|_{D_i} \approx \frac{\Psi_i(t)}{\sqrt{\kappa_i}}.
\end{equation}
\end{thm}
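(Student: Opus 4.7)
The plan is to reduce the claim to the two facts already collected just above the statement: the exact identity (\ref{Blochmodes}) expressing $\int_{D_i} u(\cdot,t)$ in terms of $\Psi_i(t)$, together with the qualitative assertion (\ref{eq:approxiamtionbloch}) that $u$ is, to leading order in $\delta$, constant in $x$ on each resonator. Once these are in hand, the theorem follows immediately by averaging:
\begin{equation*}
u(x,t)\big|_{D_i} \;\approx\; \frac{1}{|D_i|}\int_{D_i} u(y,t)\,\dx y \;=\; \frac{V_i(t)}{|D_i|} \;=\; \frac{\Psi_i(t)}{\sqrt{\kappa_i(t)}},
\end{equation*}
where the last equality is (\ref{Blochmodes}). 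The substantive content is therefore (\ref{eq:approxiamtionbloch}), and that is where I would put the work.

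To establish (\ref{eq:approxiamtionbloch}) I would work in the frequency domain with the elliptic system (\ref{eq:freq}). In the subwavelength regime $\omega = O(\delta^{1/2})$ and $\Omega = O(\delta^{1/2})$, so for each fixed Fourier index $n$ with $|n|\le M$ the zero-order coefficient $\rho_r(\omega+n\Omega)^2/\kappa_r$ in the interior equation is $O(\delta)$. Thus, inside each resonator $D_i$, the auxiliary field $v_{i,n}^\ast$ satisfies $\Delta v_{i,n}^\ast = O(\delta)\, v_{i,n}^{\ast\ast}$. From the transmission condition $\delta\,\partial_\nu v_n|_+ = \partial_\nu v_{i,n}^\ast|_-$ and the boundedness of the exterior Helmholtz field $v_n$ in $H^1$, the interior Neumann data are likewise $O(\delta)$. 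A standard elliptic estimate (Neumann problem for the Laplacian with $O(\delta)$ forcing and $O(\delta)$ boundary data on a $C^{1,s}$ domain) then yields
\begin{equation*}
\Big\| v_{i,n}^\ast - \tfrac{1}{|D_i|}\!\!\int_{D_i} v_{i,n}^\ast \Big\|_{H^1(D_i)} \;=\; O(\delta).
\end{equation*}
Since $v_{i,n}^\ast$ is a finite linear combination (through the Fourier coefficients $r_{i,m}$) of the $v_{n-m}$, this estimate may be inverted to conclude that each $v_n$ is constant on $D_i$ up to an $O(\delta)$ error, uniformly in $|n|\le M$.

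With the uniform estimate for the individual Fourier modes in hand, I would sum the Fourier series $u(x,t) = e^{\iu\omega t}\sum_n v_n(x) e^{\iu n \Omega t}$, using the subwavelength definition (Definition~\ref{def:sub}) to truncate the tail at negligible cost and the bound $\|v_n\|_{H^1} = o(1/n)$ from (\ref{eq:reg_v}) for the large-$|n|$ contribution. This propagates the ``approximately constant in $x$'' property from each $v_n$ to $u(\cdot,t)|_{D_i}$ and justifies (\ref{eq:approxiamtionbloch}). Combining with (\ref{Blochmodes}) as above then produces (\ref{eq:finiteHill}).

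The main obstacle I anticipate is making the per-mode elliptic estimate uniform in $n$ while keeping the dependence on $\delta$ explicit, since the coupling of $v_{i,n}^\ast$ and $v_{i,n}^{\ast\ast}$ through convolutions in $n$ must be unwound before one can actually read off ``$v_n$ is constant on $D_i$'' from the conclusion on $v_{i,n}^\ast$. For the modulation class in the Example where $\kappa_i$ is constant and $1/\rho_i$ has only finitely many Fourier modes, this unwinding is algebraic and amounts to inverting a band-limited convolution operator, which is the cleanest way to close the argument; the general statement reduces to the truncated case via Definition~\ref{def:sub}.
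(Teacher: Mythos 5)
Your reduction is exactly what the paper does: Theorem \ref{thm:blochapprox} is obtained by averaging, i.e., by combining the identity (\ref{Blochmodes}) with the near-constancy statement (\ref{eq:approxiamtionbloch}), and the paper itself asserts (\ref{eq:approxiamtionbloch}) without further proof. Your elliptic-estimate sketch for (\ref{eq:approxiamtionbloch}) --- near-harmonicity of $v_{i,n}^\ast$ with $O(\delta)$ Neumann data from the transmission condition, followed by inverting the band-limited convolution by $1/\rho_i$ to pass from $v_{i,n}^\ast$ back to $v_n$ --- supplies more detail than the paper gives and is consistent with the standard subwavelength-resonance arguments it relies on.
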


In the following example, we use formula (\ref{eq:finiteHill}) to simulate edge modes in the static case (i.e., the unmodulated case) 
in a SSH chain of resonators. 
This structure is based on the intuition that if one joins together two chains with different topological properties, a protected edge mode will occur at the interface (this is the principle of bulk-boundary correspondence). Our formula numerically shows as in \cite{ammari2020topological} that this chain exhibits a topologically protected subwavelength edge mode.

%
%
\begin{example} \label{example_topo}
In \cite{ammari2020topological}, it is proven that the (topologically nontrivial) structure illustrated in Figure \ref{fig:SSH13_structure} has an edge mode due to a geometrical defect, resulting from the Zak phase differences.  In concrete, this effect is observed in structures where a trimer is repeated finitely many times with a single resonator in the middle, as depicted in Figure \ref{fig:SSH}. Using the approximation formula (\ref{eq:finiteHill}) in Theorem \ref{thm:blochapprox} we can obtain the same results showing existence of the edge Bloch modes; see Figure \ref{fig:SSH13_edge}. Note that the numerical simulations presented in  \cite{ammari2020topological} were based on solving the underlying partial differential equations ans not on using the approximation formula (\ref{eq:finiteHill}).

\begin{figure}[H]
\centering
    \begin{subfigure}[t]{0.45\textwidth}
        \includegraphics[scale=0.5]{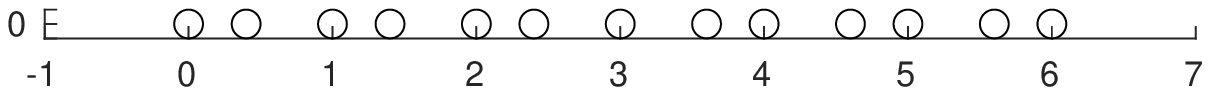}
        \caption{A chain with 13 resonators.}
        \label{fig:SSH13_structure}
    \end{subfigure}
\begin{subfigure}[t]{0.45\textwidth}
    \includegraphics[scale=0.5]{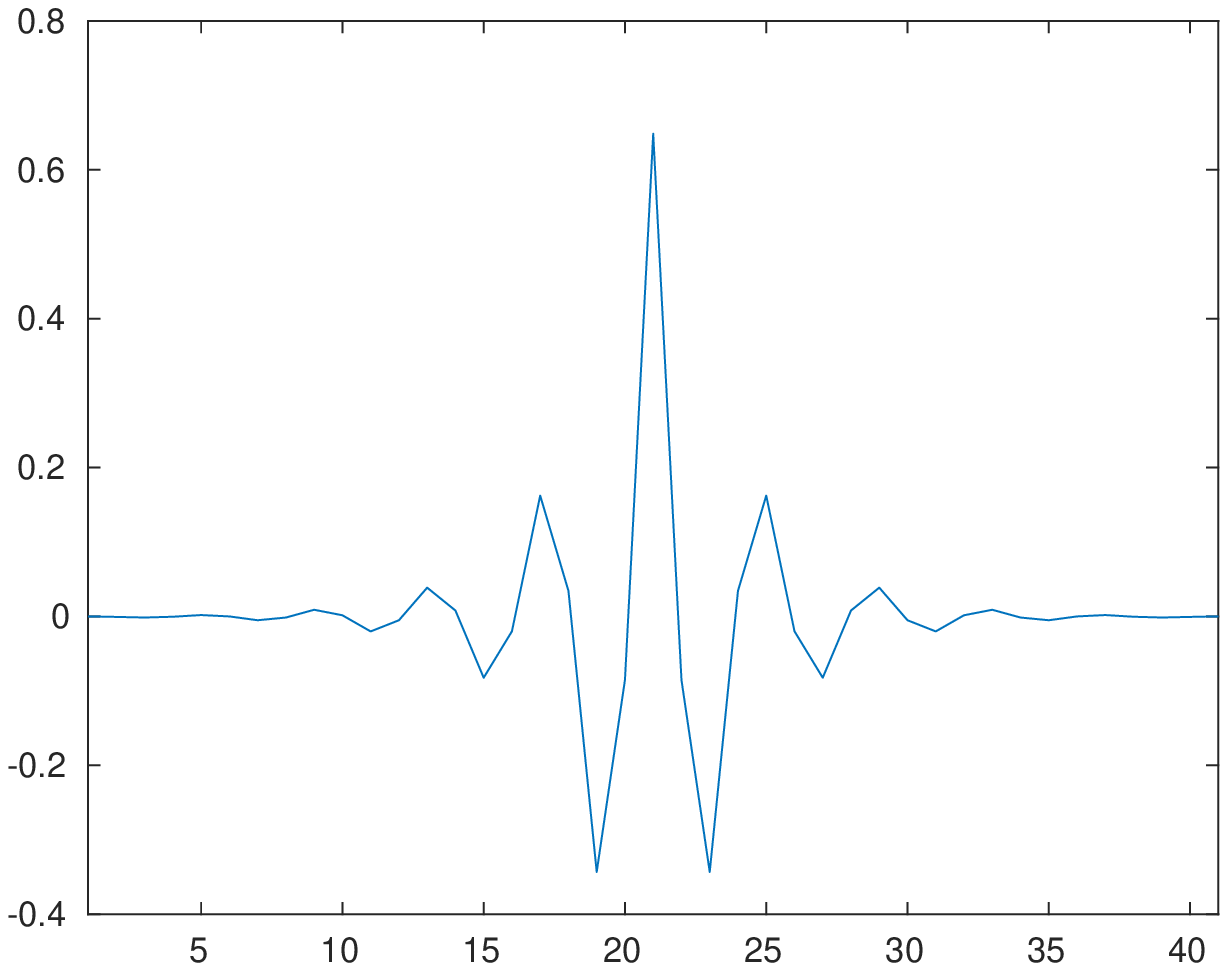}
    \caption{Edge Bloch mode illustration using (\ref{eq:finiteHill}) with $N=41$.}
    \label{fig:SSH13_edge}
\end{subfigure}
\caption{Edge mode in a SSH chain of unmodulated subwavelength resonators.}
\label{fig:SSH}
\end{figure}
\end{example}
\begin{rmk}
Using the above formulation, we can investigate the stability of the edge Bloch mode under time-modulation. To that end, we plot the Bloch modes for different amplitudes $\varepsilon$ of the modulation. Here we set the modulation frequency $\Omega =0.2$ and the number of resonators $N=41$. 
The phase shifts, defined in (\ref{modulation_parameters}), are given by $\phi_1=2\pi/3,\phi_2=4\pi/3,\phi_3 = 0$ for the first trimer on the left and then repeated periodically. Figure \ref{fig:SSH13} shows that the edge mode is stable with respect to time-modulation while the other (non-localized) modes  are sensitive to time-modulations as illustrated in Figure \ref{fig:SSH13_edge2}. 

\begin{figure}[H]
\centering
    \begin{subfigure}[t]{0.45\textwidth}
        \includegraphics[scale=0.5]{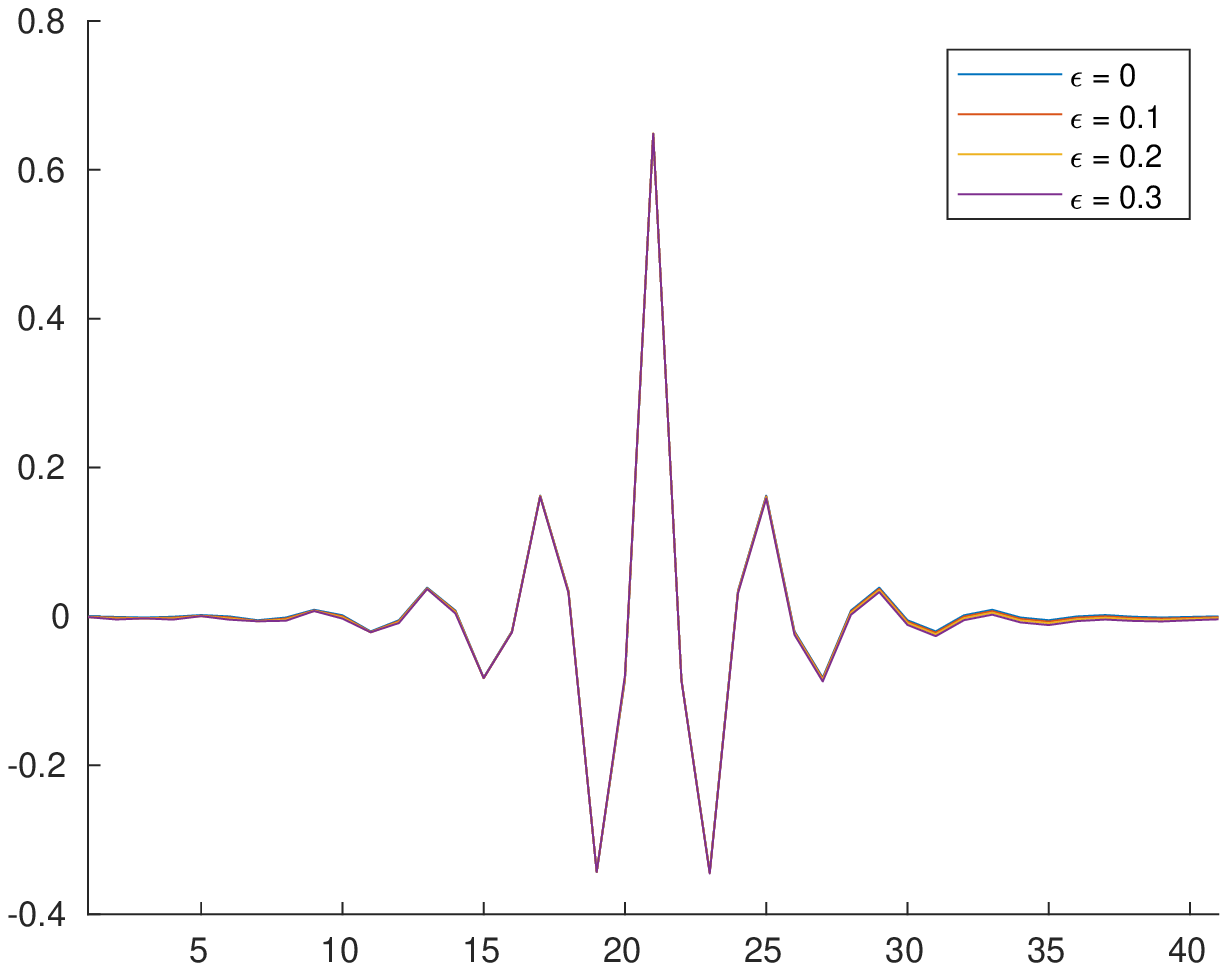}
        \caption{Edge Bloch modes under time-modulation for different modulation amplitudes.}
        \label{fig:SSH13_structure2}
    \end{subfigure}
\begin{subfigure}[t]{0.45\textwidth}
    \includegraphics[scale=0.5]{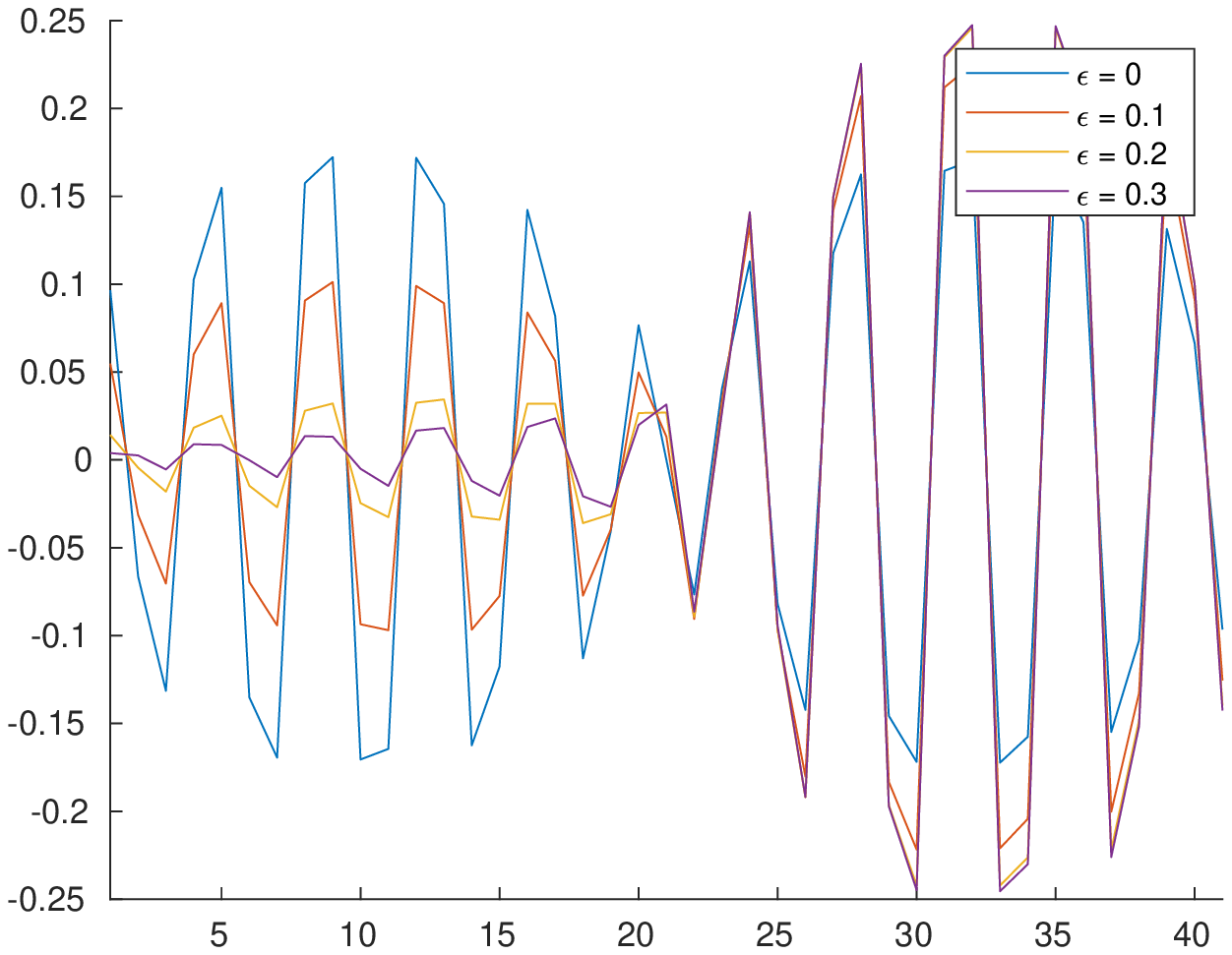}
    \caption{A certain non-edge Bloch mode under time-modulation for different modulation amplitudes.}
    \label{fig:SSH13_edge2}
\end{subfigure}
\caption{Stability of the edge mode under time-modulation.}  \label{fig:SSH13}
\end{figure}
\end{rmk}

With approximation (\ref{eq:approxiamtionbloch}) at hand, we also would like to derive some useful properties in the unmodulated case. First, in order to apply the general Floquet theory as outlined in Section \ref{sec:Floquettheory},  we rewrite the second order Hill's equation (\ref{Hill}) as a first order one by setting
\begin{equation*}
    y(t)=
    \begin{pmatrix}
    \phi(t) \\
    \frac{\mathrm{d}\phi}{\mathrm{d}t}(t)
    \end{pmatrix}.
\end{equation*}
We then obtain that
\begin{equation}
\label{firstorderHill}
A(t)y(t)=\frac{\mathrm{dy}}{\mathrm{d}t}(t),
\end{equation}
where
\begin{equation*}
    A(t):=\begin{pmatrix}
    0 & \mathrm{Id}_N\\
    -M(t) & 0
    \end{pmatrix}.
\end{equation*}

The following lemmas hold. 
\begin{lem}
Let $X(t)$ be the fundamental matrix of the Hill equation (\ref{firstorderHill}). Then $y^\omega(0)$ is an eigenvector of $X(T)$ associated with the eigenvalue $e^{\mathrm{i}\omega T}$.
\end{lem}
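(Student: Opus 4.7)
The proof strategy is essentially a direct unpacking of definitions, so the main task is to assemble the pieces cleanly rather than invoke heavy machinery. My plan is to use the defining property of the fundamental matrix together with the quasiperiodicity of the Bloch solution $y^\omega$ to obtain the eigenvalue relation.

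First, I would recall the setup: since $X(t)$ is the fundamental solution of \eqref{firstorderHill} with initial value $X(0) = \mathrm{Id}_{2N}$, every solution $y$ of the first-order system can be represented as $y(t) = X(t) y(0)$. In particular, applied to the Bloch mode $y^\omega$ associated with the quasifrequency $\omega$, one has $y^\omega(T) = X(T)\, y^\omega(0)$.

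Next I would invoke the quasiperiodicity of $y^\omega$ coming from Floquet's theorem as formulated in \eqref{floquetthm}. By definition of the Bloch solution corresponding to the characteristic multiplier $e^{\mathrm{i}\omega T}$, we have $y^\omega(t+T) = e^{\mathrm{i}\omega T} y^\omega(t)$ for all $t$. Setting $t=0$ gives $y^\omega(T) = e^{\mathrm{i}\omega T} y^\omega(0)$.

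Equating the two expressions for $y^\omega(T)$ then yields
\begin{equation*}
X(T)\, y^\omega(0) = e^{\mathrm{i}\omega T}\, y^\omega(0),
\end{equation*}
which is precisely the claim that $y^\omega(0)$ is an eigenvector of the monodromy matrix $X(T)$ with eigenvalue $e^{\mathrm{i}\omega T}$. There is really no obstacle here; the only subtle point worth mentioning explicitly is the nonvanishing of $y^\omega(0)$, which follows because a Bloch solution is by construction nontrivial (if $y^\omega(0) = 0$, then by uniqueness of solutions to \eqref{firstorderHill}, $y^\omega$ would vanish identically).
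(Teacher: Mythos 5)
Your proof is correct and is essentially identical to the paper's: both use $y^\omega(T)=X(T)y^\omega(0)$ from the definition of the fundamental matrix and the quasiperiodicity $y^\omega(t+T)=e^{\mathrm{i}\omega T}y^\omega(t)$ at $t=0$. Your added remark on the nonvanishing of $y^\omega(0)$ via uniqueness is a small but welcome refinement the paper leaves implicit.
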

\begin{proof}
For each quasifrequency $\omega$, there exists $y^\omega(t)$ such that
\begin{equation}
\label{firstorderHill2}
    y^\omega(t+T) = e^{\mathrm{i}\omega T}y^\omega(t).
\end{equation}
Then we immediately have 
\begin{equation*}
    X(T)y^\omega(0)= y^\omega(T)=e^{\mathrm{i}\omega T}y^\omega(0).
\end{equation*}
\end{proof}
\begin{lem}
In the unmodulated case, the eigenvectors of $X(T)$ are given by those of $A$. That is, $v$ is an eigenvector of $A$ associated with the eigenvalue $\lambda$ if and only if $v$ is an eigenvector of $X(T)$ associated with the eigenvalue $e^{\lambda T}$. 
\end{lem}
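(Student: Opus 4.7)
My plan is to use the fact that in the unmodulated case the matrix $A$ is time-independent, so the fundamental matrix of \eqref{firstorderHill} with $X(0)=\mathrm{Id}_{2N}$ is simply $X(t)=e^{At}$, and in particular $X(T)=e^{AT}$. The lemma then amounts to showing that the eigenpairs of $A$ and of $e^{AT}$ are in one-to-one correspondence via $\lambda \mapsto e^{\lambda T}$, using the folding-number convention of Definition \ref{foldingnumber} to make the correspondence unambiguous.

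First I would dispose of the forward implication. If $Av=\lambda v$, then $A^k v=\lambda^k v$ for every $k\ge 0$, so the power-series definition of the matrix exponential gives
\begin{equation*}
e^{AT}v \;=\; \sum_{k=0}^{\infty}\frac{(AT)^k}{k!}\,v \;=\; \Bigl(\sum_{k=0}^{\infty}\frac{(\lambda T)^k}{k!}\Bigr)v \;=\; e^{\lambda T}v,
\end{equation*}
so $v$ is an eigenvector of $X(T)=e^{AT}$ with eigenvalue $e^{\lambda T}$. This half is routine and needs no subwavelength hypothesis.

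For the converse I would argue by decomposing $\mathbb{C}^{2N}$ into the generalised eigenspaces $E_{\lambda_1},\dots,E_{\lambda_r}$ of $A$, where $\lambda_1,\dots,\lambda_r$ are its distinct eigenvalues (chosen via Definition \ref{foldingnumber} so that $\mathrm{Im}\,\lambda_j\in[-\Omega/2,\Omega/2)$). Since $A$ and $e^{AT}$ commute, each $E_{\lambda_j}$ is invariant under $e^{AT}$, and the only eigenvalue of $e^{AT}\!\restriction\!E_{\lambda_j}$ is $e^{\lambda_j T}$. If $v$ is an eigenvector of $X(T)$ with eigenvalue $e^{\lambda T}$, decomposing $v=\sum_j v_j$ with $v_j\in E_{\lambda_j}$ and matching components yields $(e^{AT}-e^{\lambda T})v_j=0$ on each $E_{\lambda_j}$; since $e^{AT}-e^{\lambda T}$ is invertible on $E_{\lambda_j}$ whenever $e^{\lambda_j T}\ne e^{\lambda T}$, only the components with $\lambda_j=\lambda$ survive, so $v\in E_\lambda$. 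In the unmodulated case $A$ is diagonalisable (in the generic dilute regime its eigenvalues are $\pm \mathrm{i}\sqrt{\mu_k}$ for the distinct positive eigenvalues $\mu_k$ of $M_0=(\delta\kappa_r/\rho_r)C$), so $E_\lambda$ is the honest eigenspace of $A$ for $\lambda$, giving $Av=\lambda v$.

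The one point that requires care, and which I view as the main obstacle, is injectivity of $\lambda\mapsto e^{\lambda T}$ on the spectrum of $A$: without it, an eigenvector of $e^{AT}$ for $e^{\lambda T}$ could in principle live in $E_{\lambda_j}\oplus E_{\lambda_k}$ for two different eigenvalues $\lambda_j,\lambda_k$ satisfying $(\lambda_j-\lambda_k)T\in 2\pi\mathrm{i}\mathbb{Z}$. This is precisely what the folding-number convention rules out, since it pins the imaginary parts of all $\lambda_j$ to the fundamental strip $[-\Omega/2,\Omega/2)$ and the real parts are (to leading order) zero in the subwavelength regime $\omega=O(\delta^{1/2})$; so $e^{\lambda_j T}=e^{\lambda_k T}$ forces $\lambda_j=\lambda_k$, and the correspondence asserted in the lemma is genuine.
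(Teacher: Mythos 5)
Your proof is correct and rests on the same underlying fact the paper invokes -- the spectral mapping property of the matrix exponential applied to $X(T)=e^{AT}$ -- but where the paper dismisses everything after $X(t)=e^{At}$ with ``the rest follows from Lie theory,'' you actually supply the argument, and in doing so you expose the one genuine hypothesis the lemma needs. The forward direction is routine, as you say. Your converse via the generalised eigenspace decomposition is the right mechanism, and your observation that the ``only if'' direction can fail when two distinct eigenvalues $\lambda_j,\lambda_k$ of $A$ satisfy $(\lambda_j-\lambda_k)T\in 2\pi\mathrm{i}\mathbb{Z}$ is exactly the point the paper's one-line proof silently assumes away. One small imprecision: Definition \ref{foldingnumber} folds the eigenvalues of $F$, not those of $A$, so it does not by itself ``pin'' the spectrum of $A$ to the strip $[-\Omega/2,\Omega/2)$; the eigenvalues $\pm\mathrm{i}\sqrt{\mu_k}$ of $A$ are what they are, and $\sqrt{\mu_k}$ need not be smaller than $\Omega/2$ merely because both are $O(\delta^{1/2})$. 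What you actually need, and should state, is the generic non-resonance condition that no two eigenvalues of $A$ differ by a nonzero integer multiple of $\mathrm{i}\Omega$ (equivalently, $\sqrt{\mu_j}\pm\sqrt{\mu_k}\notin\Omega\mathbb{Z}\setminus\{0\}$); under that assumption your argument closes, and your diagonalisability claim for $A$ is also fine since $M_0=(\delta\kappa_r/\rho_r)C$ is symmetric positive definite. Net effect: same approach as the paper, but your version is a complete proof and the paper's is not.
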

\begin{proof}
In the unmodulated case, the fundamental matrix is given by
$\ds X(t)= e^{At}.$
The rest follows from Lie theory. 
\end{proof}
\begin{rmk}
\label{rmk:freq}
In the unmodulated case, $A$ is given by
\begin{equation*}
    A_{\varepsilon=0} = 
    \begin{pmatrix}
    0 & \mathrm{Id}\\
    \frac{\delta \kappa_r}{\rho_r}C & 0
    \end{pmatrix},
\end{equation*}
where $C$ is the capacitance matrix of the system, which is proven to be symmetric positive definite \cite{ammari2021functional}. The eigenvectors of $A$ come in pairs. In fact, $(v\ \pm av)^\top$ are eigenvectors of $A$, where $v$ is an eigenvector of $({\delta \kappa_r}/{\rho_r}) C$ with eigenvalue $a^2$ and $\top$ denotes the transpose. This is because
\begin{equation*}
    A\begin{pmatrix}
    v \\ \pm av
    \end{pmatrix} = \begin{pmatrix}
    \pm av \\ a^2v 
    \end{pmatrix}
    =\pm a \begin{pmatrix}
    v \\ \pm av
    \end{pmatrix} . 
\end{equation*}    
\end{rmk}

\section{Localization of edge modes in supercell structures}
\subsection{Degenerate edge modes in supercell structures}
In this subsection, we would like to introduce the so-called supercell structure. Defined similarly as in \cite{fleury2016floquet}, a supercell consists of two trimers time-modulated with two opposite orientations. Here, we consider all six disk resonators having the same radius $R$. A structure with $L$ supercells is defined as $L$ such supercells lined up equidistantly.
\begin{figure}[h]
\centering
\includegraphics[scale=0.7]{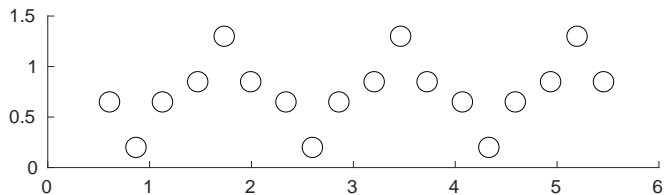}
\caption{Illustration of a supercell structure with three supercells (L=3).}
\label{fig:supercells}
\end{figure}
Mathematically, a supercell structure is defined as a system of disk resonators centered at
\begin{equation}
	\begin{split}
	& c_1 = (\frac{\sqrt{3}}{2},\frac{1}{2})+3R(\mathrm{cos}(\frac{5\pi}{6}),\mathrm{sin}(\frac{5\pi}{6})),  \ \ 
	 c_2 = (\frac{\sqrt{3}}{2},\frac{1}{2})+3R(\mathrm{cos}(\frac{3\pi}{2}),\mathrm{sin}(\frac{3\pi}{2})),\\
& c_3 =  (\frac{\sqrt{3}}{2},\frac{1}{2})+3R(\mathrm{cos}(\frac{\pi}{6}),\mathrm{sin}(\frac{\pi}{6})), \ \ 
	 c_4 =  (\sqrt{3},1)+3R(\mathrm{cos}(\frac{7\pi}{6}),\mathrm{sin}(\frac{7\pi}{6})),\\
& c_5 = (\sqrt{3},1)+3R(\mathrm{cos}(\frac{\pi}{2}),\mathrm{sin}(\frac{\pi}{2})), \ \ 
 c_6 = (\sqrt{3},1)+3R(\mathrm{cos}(\frac{11\pi}{6}),\mathrm{sin}(\frac{11\pi}{6})).
	\end{split}
\end{equation}
A structure with $L$ supercells is defined analogously as a system of disk resonators centered at
\begin{equation}
\label{periodicstructure}
	c_{i+6j} = c_i + j(\sqrt{3},0),
\end{equation}
for $i=1,\ldots,6$ and $j= 1,\ldots,L-1$; see Figure \ref{fig:supercells}.\\

Using the formulation of Bloch modes derived in the previous sections, we first prove the existence of degenerate edge modes. Then we show numerically that under time-modulation, these edge modes split into two, one localized on the left end side of the structure and the other on the right end side. To do so, we first determine algebraically the capacitance matrix corresponding to the $L$-supercell structure based on the symmetry of the structure. As seen in Remark \ref{rmk:freq}, the Bloch modes in the time-independent case are closely related to the eigenvectors of the capacitance matrix.
\begin{thm}
\cite{feppon}
\label{thm:symmetries}
(Symmetry of the capacitance matrix) Let $D\subset \mathbb{R}^3$ be  a system of $N$ resonators. If there exists a permutation $\sigma$ of the resonators  associated with the isometry $S$: $\mathbb{R}^3\rightarrow  \mathbb{R}^3 $ such that $SD=D$, then the coefficients of the capacitance matrix satisfy
    \begin{equation}
        C_{\sigma(i),\sigma(j)} = C_{i,j}.
    \end{equation}
\end{thm}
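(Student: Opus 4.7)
The plan is to exploit the fact that the Newtonian kernel $G(x,y)=-\frac{1}{4\pi|x-y|}$ is invariant under isometries, which will force every object built from $G$ (in particular the single-layer potential, its inverse, and hence the functions $\psi_i$) to intertwine with the symmetry $S$. Once this intertwining is established, the identity $C_{\sigma(i),\sigma(j)}=C_{i,j}$ becomes an immediate change-of-variables computation on the boundary integral.

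First, I would observe that since $S$ is an isometry, $|Sx-Sy|=|x-y|$, so $G(Sx,Sy)=G(x,y)$. Combined with the assumption $SD=D$ (which in particular implies $S(\partial D_i)=\partial D_{\sigma(i)}$ and that $S$ preserves the surface measure $d\sigma$), a change of variables $y\mapsto Sy$ in the defining integral of $\mathcal{S}_D$ yields the intertwining relation
\begin{equation*}
\mathcal{S}_D[\phi\circ S^{-1}](Sx)=\mathcal{S}_D[\phi](x)\qquad\text{for all }\phi\in L^2(\partial D),\ x\in\mathbb{R}^3.
\end{equation*}
Equivalently, if $U_S$ denotes the unitary pullback operator $U_S\phi:=\phi\circ S^{-1}$ on $L^2(\partial D)$ (and analogously on $H^1(\partial D)$), then $\mathcal{S}_D\circ U_S=U_S\circ \mathcal{S}_D$ on $\partial D$. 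Since $\mathcal{S}_D:L^2(\partial D)\to H^1(\partial D)$ is invertible, its inverse satisfies the same intertwining $(\mathcal{S}_D)^{-1}\circ U_S=U_S\circ(\mathcal{S}_D)^{-1}$.

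Next, I would apply this to the characteristic functions that define the $\psi_i$. Because $S(\partial D_i)=\partial D_{\sigma(i)}$ we have $U_S\chi_{\partial D_i}=\chi_{\partial D_{\sigma(i)}}$, so
\begin{equation*}
\psi_{\sigma(i)}=(\mathcal{S}_D)^{-1}[\chi_{\partial D_{\sigma(i)}}]=(\mathcal{S}_D)^{-1}[U_S\chi_{\partial D_i}]=U_S\,\psi_i=\psi_i\circ S^{-1}.
\end{equation*}
Then for the capacitance coefficients,
\begin{equation*}
C_{\sigma(i),\sigma(j)}=-\int_{\partial D_{\sigma(i)}}\psi_{\sigma(j)}\,d\sigma=-\int_{\partial D_{\sigma(i)}}\psi_j\circ S^{-1}\,d\sigma,
\end{equation*}
and a final change of variables $y=Sx$, using that $S$ sends $\partial D_i$ onto $\partial D_{\sigma(i)}$ isometrically and preserves the surface measure, gives $C_{\sigma(i),\sigma(j)}=-\int_{\partial D_i}\psi_j\,d\sigma=C_{i,j}$, which is the desired identity.

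The proof is essentially a bookkeeping argument, so there is no deep obstacle; the only point that requires a small amount of care is the commutation $(\mathcal{S}_D)^{-1}\circ U_S=U_S\circ(\mathcal{S}_D)^{-1}$, which must be justified by the invertibility of $\mathcal{S}_D$ between the correct function spaces and by the fact that $U_S$ preserves both $L^2(\partial D)$ and $H^1(\partial D)$ (since $S$ is an isometry of $\mathbb{R}^3$ mapping $\partial D$ to itself). Everything else reduces to elementary invariance of the Newtonian kernel and the surface measure under $S$.
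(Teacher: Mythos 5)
Your argument is correct and complete: the invariance of the kernel $G$ under isometries, the resulting intertwining of $\mathcal{S}_D$ (and hence of its inverse) with the pullback $U_S$, the identity $U_S\chi_{\partial D_i}=\chi_{\partial D_{\sigma(i)}}$, and the final change of variables together give $C_{\sigma(i),\sigma(j)}=C_{i,j}$. The paper itself states this theorem as a citation to an external reference and provides no proof, but your argument is the standard one for this result and matches what the cited source does, so there is nothing to add.
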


\begin{figure}[H]
\centering
\includegraphics[scale=0.5]{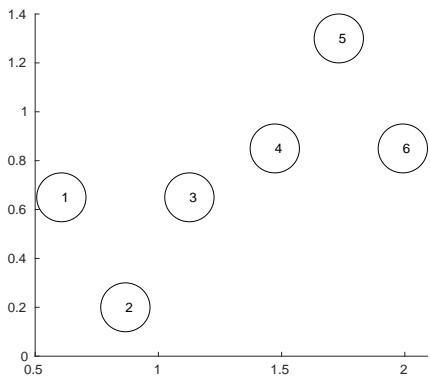}
\caption{Illustration of a structure with one supercell $(L=1)$, where the resonators are enumerated from left to right.} \label{fig:one_supercell_enumerated}
\end{figure}

\begin{example}
\label{thm:structureC}
For the case of one supercell, we can enumerate the resonators as in Figure \ref{fig:one_supercell_enumerated}. The geometric structure is invariant under the permutations
\begin{equation}
    \sigma_1=(1\ 2)(5\ 6),\ \sigma_2=(1\ 5)(2\ 6)(3\ 4)\ \text{and}\ \sigma_3 = (1\ 6)(2\ 5)(3\ 4).
\end{equation}
By Theorem \ref{thm:symmetries}, being invariant under $\sigma_1$ implies that
\begin{equation}
    C_{1,1} = C_{2,2}, C_{5,5}= C_{6,6}, C_{1,3}=C_{2,3}, C_{1,4}=C_{3,4}, C_{1,5}=C_{2,6}\ \textrm{and} \ C_{1,6}=C_{2,5}. 
\end{equation}
From this and the invariance  under $\sigma_2$ and $\sigma_3$ of the unit supercell structure (i.e., with $L=1$),  we conclude that the symmetric capacitance matrix reads
\begin{equation}
\label{eq:singlesupercellcap}
C=\begin{pmatrix}
a & c & k & d & b & h \\
c & a & k & d & h & b \\
k & k & e & f & d & d \\
d & d & f & e & k & k \\
b & h & d & k & a & c \\
h & b & d & k & c & a
\end{pmatrix},
\end{equation}
\end{example}
where $a, \ldots, k \in \mathbb{R}$. 

In \cite{ammari2020topological}, the asymptotic behaviour of the capacitance matrix and its matrix elements in the dilute regime is described by the following theorem. 

\begin{thm}
\label{thm:dilute}
 For a given resonators system with equal sizes in the dilute regime
    \begin{equation}
        D = \bigcup_{j=1}^N(B+\gamma^{-1}z_j),
    \end{equation}
    the entries of the capacitance matrix have the following asymptotic behavior:
\begin{equation}
C_{i,j}= \begin{cases}\operatorname{Cap}_{B}+O\left(\gamma^{2}\right), & i=j, \\ -\frac{\gamma \operatorname{Cap}_B^2 }{4\pi \left|z_{i}-z_{j}\right|}+O\left(\gamma^{2}\right), & i \neq j, \end{cases}\end{equation}
where the scaling parameter $\gamma\rightarrow 0$ and $\gamma^{-1}z_j$ represents the position of each resonator. Here, the capacity $\mathrm{Cap}_B$ of $B$ is given by
\begin{equation}
	\mathrm{Cap}_B=-\int_{\partial B}(\mathcal{S}_B)^{-1}[\chi_{\partial D}]\mathrm{d}\sigma
\end{equation}
with $\mathcal{S}_B$ being the single-layer potential associated with the Laplacian on $\partial B$. 
\end{thm}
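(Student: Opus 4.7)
The plan is to exploit the fact that the defining equation $\mathcal{S}_D[\psi_j]=\chi_{\partial D_j}$ decomposes into a coupled system on the boundary components $\partial D_1,\ldots,\partial D_N$, in which the diagonal blocks are the single-body operators $\mathcal{S}_{D_i}$ (independent of $\gamma$ up to translation, and translation does not affect $\mathcal{S}$) while every off-diagonal block is a small perturbation of order $\gamma$. Concretely, for $x\in\partial D_i$ one rewrites
\begin{equation*}
\mathcal{S}_{D_i}\!\bigl[\psi_j|_{\partial D_i}\bigr](x) \;+\; \sum_{l\neq i}\int_{\partial D_l} G(x,y)\,\psi_j(y)\,\mathrm{d}\sigma(y) \;=\; \delta_{ij}\,\chi_{\partial D_i}(x),
\end{equation*}
and since $|x-y|=\gamma^{-1}|z_i-z_l|+O(1)$ for the cross-terms one has the pointwise expansion $G(x,y)=-\gamma/(4\pi|z_i-z_l|)+O(\gamma^{2})$ uniformly in $x\in\partial D_i$, $y\in\partial D_l$. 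This suggests an Ansatz $\psi_j=\psi_j^{(0)}+\gamma\psi_j^{(1)}+O(\gamma^{2})$ obtained by Neumann-series inversion of the block operator, with error bounds coming from the boundedness of each $(\mathcal{S}_{D_i})^{-1}:H^{1}(\partial D_i)\to L^{2}(\partial D_i)$.

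At order $\gamma^{0}$ the off-diagonal interactions drop out, so $\psi_j^{(0)}|_{\partial D_j}=(\mathcal{S}_{D_j})^{-1}[\chi_{\partial D_j}]$ and $\psi_j^{(0)}|_{\partial D_i}=0$ for $i\neq j$. Since $D_j$ is a rigid translate of $B$ and $G$ is translation invariant, $\int_{\partial D_j}\psi_j^{(0)}\,\mathrm{d}\sigma=-\mathrm{Cap}_B$, yielding the diagonal leading term $C_{j,j}=\mathrm{Cap}_B+O(\gamma^{2})$ (the absence of a $\gamma^{1}$ correction on the diagonal follows because $\psi_j^{(0)}$ vanishes on all $\partial D_l$ with $l\neq j$, so the first-order source on $\partial D_j$ is zero). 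For $i\neq j$ the first-order equation on $\partial D_i$ reads
\begin{equation*}
\mathcal{S}_{D_i}\!\bigl[\psi_j^{(1)}|_{\partial D_i}\bigr] \;=\; -\int_{\partial D_j} G(x,y)\,\psi_j^{(0)}(y)\,\mathrm{d}\sigma(y) \;=\; -\frac{\mathrm{Cap}_B}{4\pi|z_i-z_j|}\,\chi_{\partial D_i}+O(\gamma),
\end{equation*}
where I replaced $G(x,y)$ by its leading constant value. Applying $-\int_{\partial D_i}(\mathcal{S}_{D_i})^{-1}[\,\cdot\,]\,\mathrm{d}\sigma$ converts the constant source $c\,\chi_{\partial D_i}$ into $c\,\mathrm{Cap}_B$, so
\begin{equation*}
C_{i,j} \;=\; \gamma\cdot\Bigl(-\frac{\mathrm{Cap}_B}{4\pi|z_i-z_j|}\Bigr)\cdot\mathrm{Cap}_B + O(\gamma^{2}) \;=\; -\,\frac{\gamma\,\mathrm{Cap}_B^{2}}{4\pi|z_i-z_j|}+O(\gamma^{2}),
\end{equation*}
which is the claimed off-diagonal formula.

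The main technical obstacle is making the remainder genuinely $O(\gamma^{2})$ in operator norm, rather than just a formal expansion. This requires (i) a quantitative estimate $\|G(x,\cdot)+\gamma/(4\pi|z_i-z_j|)\|_{L^\infty(\partial D_l)}=O(\gamma^{2})$, obtained by Taylor expanding $1/|x-y|$ around the centers $\gamma^{-1}z_i,\gamma^{-1}z_j$ and using that $\mathrm{diam}(B)$ is fixed while the separation is $\gamma^{-1}|z_i-z_j|$; and (ii) a uniform-in-$\gamma$ invertibility of the full block operator, which follows from writing it as $\mathrm{diag}(\mathcal{S}_{D_i})(I+\gamma K_\gamma)$ with $\|K_\gamma\|$ bounded, so that the Neumann series converges for all sufficiently small $\gamma$. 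Once these two estimates are in place, the argument above becomes rigorous and delivers the uniform $O(\gamma^{2})$ remainder in both cases.
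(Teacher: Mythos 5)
Your proof is correct. The paper does not actually prove Theorem \ref{thm:dilute} --- it is quoted from the cited reference \cite{ammari2020topological} --- and your argument (block decomposition of $\mathcal{S}_D$ into single-body operators $\mathcal{S}_{D_i}$ plus $O(\gamma)$ cross-interactions, the uniform kernel expansion $G(x,y)=-\gamma/(4\pi|z_i-z_l|)+O(\gamma^2)$, and Neumann-series inversion with the translation-invariance reduction to $\mathrm{Cap}_B$) is essentially the standard proof given there, with the signs and the vanishing of the first-order diagonal correction handled correctly.
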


Note that $z_j= \gamma^{-1} c_j, j=1,\ldots, N$, where the $c_j$'s are defined in (\ref{periodicstructure}). 

The next result  is our first main result in this paper.  It shows that the approximate degeneracy of at least one of the eigenvalues of the capacitance matrix $C$. 
\begin{thm}
\label{thm:main_degenerate}
    Let $L$ denote the number of supercells and $C$ the corresponding capacitance matrix. There exists in the dilute regime, up to an error of the order of $\gamma d$, an eigenvalue $\lambda$ with multiplicity two. Here, $d$ is a small constant fixed by the distance between the centers of the neighbouring resonators. Moreover, the corresponding eigenvectors are given by
    \begin{equation}\label{v1v2}
        v_1 = 
        \begin{pmatrix}
        -1\\
        1\\
        0_{N-4}\\
        1\\
        -1
        \end{pmatrix}        
    \textrm{and}\ 
       v_2 = 
        \begin{pmatrix}
        -1\\ 1\\ 0_{N-4}\\ -1\\ 1\\
        \end{pmatrix},
    \end{equation}
and we have
\begin{equation*}
	(C - \lambda\mathrm{Id}_N) v_i = v^i_\delta,\ \ i=1,2, 
\end{equation*}
where the entries of the error term $v^i_\delta$ are of order $O(\gamma  d)$. 
\end{thm}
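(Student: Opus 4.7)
The plan is to exploit the asymptotic form of the capacitance matrix from Theorem \ref{thm:dilute} together with the permutation symmetries identified in Theorem \ref{thm:symmetries} and Example \ref{thm:structureC}, extended to the $L$-supercell structure by translation invariance. Writing, in the dilute regime,
\begin{equation*}
C_{i,i} = \mathrm{Cap}_B + O(\gamma^2),\qquad C_{i,j} = -\frac{\gamma\,\mathrm{Cap}_B^2}{4\pi\,|z_i-z_j|} + O(\gamma^2)\quad (i\neq j),
\end{equation*}
I would first observe that $v_1$ and $v_2$ are supported only on the four edge indices $\{1,2,N-1,N\}$. Using the two-fold rotational symmetry $k \mapsto N+1-k$ and the local swap $(1\,2)(N-1\,N)$ of the unmodulated structure, the corresponding $4\times 4$ edge submatrix of $C$ takes the persymmetric form
\begin{equation*}
\begin{pmatrix}
a & b & c & d\\
b & a & d & c\\
c & d & a & b\\
d & c & b & a
\end{pmatrix},\qquad a=\mathrm{Cap}_B+O(\gamma^2),\ b=C_{1,2},\ c=C_{1,N-1},\ d=C_{1,N}.
\end{equation*}

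A direct $4\times 4$ computation shows that $v_1|_{\mathrm{edge}}=(-1,1,1,-1)^{\!\top}$ and $v_2|_{\mathrm{edge}}=(-1,1,-1,1)^{\!\top}$ are exact eigenvectors of this block with respective eigenvalues $a-b-c+d$ and $a-b+c-d$. Their difference equals $2(d-c)$, which by the dilute formula is bounded by $\gamma\,\mathrm{Cap}_B^2\,|z_{N-1}-z_N|/(2\pi |z_1-z_{N-1}|^2)$ and hence is $O(\gamma d)$, since $z_{N-1}$ and $z_N$ lie within the same trimer at small intra-trimer distance $d$, while both are far from $z_1$. I would then set the common approximate eigenvalue
\begin{equation*}
\lambda := a-b = \mathrm{Cap}_B-C_{1,2},
\end{equation*}
so that both sectoral eigenvalues agree with $\lambda$ up to $O(\gamma d)$.

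Next, I would verify that extending $v_i$ by zeros to the bulk indices still yields an approximate eigenvector of the full $N\times N$ matrix $C$. For any bulk index $3\le k\le N-2$,
\begin{equation*}
(Cv_1)_k = -C_{k,1}+C_{k,2}+C_{k,N-1}-C_{k,N},
\end{equation*}
and pairing the terms as $(C_{k,2}-C_{k,1})+(C_{k,N-1}-C_{k,N})$, each paired difference is $O\!\bigl(\gamma |z_1-z_2|/|z_k-z_1|^2\bigr)=O(\gamma d)$ by the dilute formula; the same bound holds for $(Cv_2)_k$. Combined with the $O(\gamma d)$ residuals at the edge indices, this gives $(C-\lambda\,\mathrm{Id}_N)v_i = v_\delta^i$ with entries of order $O(\gamma d)$, as required.

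The main obstacle is arranging for a \emph{single} $\lambda$ to serve both $v_1$ and $v_2$: these vectors lie in different isotypic components of the two-fold rotation acting on the chain (symmetric versus antisymmetric), so in principle could correspond to distinct approximate eigenvalues. The resolution is that the only entries of $C$ distinguishing the two sectors are $c=C_{1,N-1}$ and $d=C_{1,N}$, which describe couplings between resonators lying in the two extreme trimers at opposite ends of the chain; their difference is intrinsically $O(\gamma d)$, which collapses the two sectoral eigenvalues to the common value $\lambda$ and yields the claimed approximate two-dimensional degenerate eigenspace. Linear independence of $v_1$ and $v_2$ is immediate by inspection, which completes the proof.
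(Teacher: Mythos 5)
Your argument is correct and follows essentially the same route as the paper: both rest on the dilute asymptotics of Theorem \ref{thm:dilute}, the $\pi$-rotation symmetry $C_{i,j}=C_{N+1-i,N+1-j}$ from Theorem \ref{thm:symmetries}, and the pairing of differences $C_{k,2}-C_{k,1}$ and $C_{k,N-1}-C_{k,N}$ to control the bulk residuals; the paper just organizes the edge computation by defining $\lambda:=(Cv_1)_2$ and checking entries one by one rather than diagonalizing the $4\times 4$ edge block. One small imprecision: for $L\ge 2$ the ``local swap'' $(1\,2)(N{-}1\,N)$ fixing all interior resonators is not an exact isometry of the chain (a plane isometry fixing the $N-4$ non-collinear interior centers is the identity), so the edge block is only \emph{approximately} persymmetric; this is harmless because the two equalities you actually need, $C_{1,1}=C_{2,2}$ and $C_{1,N}=C_{2,N-1}$, hold up to $O(\gamma^2)$ and $O(\gamma d)$ respectively by the dilute formula alone, exactly as the paper argues, but you should derive them that way rather than from a symmetry that is not there.
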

\begin{proof}
Fix $L$ and let $N=6L$. Then, enumerate the resonators from left to right as in Figure \ref{fig:one_supercell_enumerated}. Let $C$ denote the associated capacitance matrix. We note that the  system of resonators has the  $\pi$-rotational isometry associated with the permutation:
\begin{equation*}
    \sigma = \prod_{j=1}^{\frac{N}{2}}(j,\ N+1-j)= (1,\ N)\cdot(2,\  N-1) \cdots (\frac{N}{2},\ \frac{N}{2}+1).
\end{equation*}
By Theorem \ref{thm:symmetries}, we obtain that
\begin{equation}
\label{eq:permutation}
    C_{i,j}=C_{N+1-i,N+1-j}.
\end{equation}
Let $w:=Cv_1$. Then the $j$th entry of $w$ is given by
\begin{equation}
    w_j = -C_{j,1}+C_{j,2}+C_{j,N-1}-C_{j,N}.
\end{equation}
Define $\lambda:=w_2$ and compute for $j=1$:
\begin{equation}
\label{eq:estimate12}
\begin{split}
    -w_1-\lambda &= -(-C_{1,1}+C_{1,2}+C_{1,N-1}-C_{1,N})-(-C_{2,1}+C_{2,2}+C_{2,N-1}-C_{2,N})\\
    &= (C_{1,1}-C_{2,2})+(C_{1,N}-C_{2,N-1})\\
    & = C_{1,N}-C_{2,N}+C_{2,N}-C_{2,N-1}+O(\gamma^2)\\
    & = O(\gamma d L^{-2}),
\end{split}
\end{equation}
where $d = \lvert z_1-z_2\rvert$.
In the first step, we have used the symmetry of $C$ and (\ref{eq:permutation}) to conclude that $C_{1,2}=C_{2,1}$ and $C_{1,N-1}=C_{N,2}=C_{2,N}$. In the third step, we have estimated the quantity $C_{i,j}- C_{i,k}$ as follows:
\begin{equation}
\label{eq:estimateL}
    \begin{split}
        -{4\pi}(C_{i,j}-C_{i,k}) & = \gamma (\frac{1}{\lvert z_i-z_j\rvert}-\frac{1}{\lvert z_i-z_k\rvert})+O(\gamma^2)\\
        & \leq {\gamma}(\frac{1}{\lvert z_i-z_j\rvert}-\frac{1}{\lvert z_i-z_j\rvert+\lvert z_j-z_k\rvert})+O(\gamma^2)\\
        & = \gamma(\frac{\lvert z_j-z_k\rvert }{\lvert z_i-z_j\rvert ^2}-\frac{\lvert z_j-z_k\rvert^2 }{\lvert z_i-z_j\rvert ^3}+\ldots)+O(\gamma^2).
    \end{split}
\end{equation}
Setting $i=N,j=1,k=2$ or $i=2,j=N,k=N-1$ in the above estimate, we arrive at
\begin{equation}
    C_{1,N}-C_{2,N}=O(\gamma d L^{-2})\ \text{and}\ C_{2,N}-C_{2,N-1}=O(\gamma d L^{-2}). 
\end{equation}
Similarly, we can estimate the entries $k=3,\ldots,N-3$:
\begin{equation*}
    w_k = -C_{k,1}+C_{k,2}+C_{k,N-1}-C_{l,N}
    = O(\gamma d).
\end{equation*}
We remark that the above entries are often zero by symmetry. Finally, the estimations of $w_{N-1}$ and $w_N$ can be derived in the same way as in (\ref{eq:estimate12}) to be $\pm \lambda + O(\gamma d)$. Hence, we conclude that
\begin{equation*}
    Cv_1 = \lambda v_1 +v_\delta^1,
\end{equation*}
where the entries of $v_\delta^1$ are of the order of $\gamma d$. The same result  holds for $v_2$.
\end{proof}
\begin{rmk}
    If the number of the supercells $L$ is large enough, then the entries $w_{1}$ and $w_{2}$ can be estimated to be $\pm \lambda+O(\gamma^2)$ as in  (\ref{eq:estimate12}). The same holds for the last two entries $w_{N-1}$ and $w_{N}$. 
\end{rmk}

\begin{example}
Taking $L=1$, i.e., considering the unit supercell structure, the capacitance matrix is given by (\ref{eq:singlesupercellcap}). Hence, we can verify that
\begin{equation*}
    Cv_1=
    \begin{pmatrix}
    -a+c+b-h\\ a-c-b+h\\ 0\\ 0\\ a-c-b+h \\ -a+c+b-h 
    \end{pmatrix}=(a-c-b+h)v_1
\end{equation*}
and 
\begin{equation*}
    Cv_2=
    \begin{pmatrix}
    -a+c-b+h\\ a-c+b-h\\ 0\\ 0\\ -a+c-b+h \\ a-c+b-h 
    \end{pmatrix}=(a-c+b-h)v_2.
\end{equation*}
Define 
\begin{equation*} \label{deflambda2}
\lambda:=a-c-b+h \quad \mbox{and} \quad \lambda^\prime:=a-c+b-h, \end{equation*} and estimate
\begin{equation*}
    \eta = |\lambda -\lambda^\prime| = 2\lvert b-h \rvert=O(\gamma  \lvert z_1-z_2\rvert).
\end{equation*}
Hence, we obtain that
\begin{equation*}
	Cv_2 -\lambda v_2 = \eta v_2.
\end{equation*}
This verifies Theorem \ref{thm:main_degenerate} for $L=1$.
\end{example}

\begin{example}
\label{exp:statics}
Here, we provide an example where $L=4$ and illustrate our results numerically in Figures \ref{fig:supercell_edge1}  and \ref{fig:supercell_edge2}. We call edge
 modes the modes localized at the left and right edges of the supercell structure. In Figure \ref{figexpm}, using the multipole expansion method (see, for instance, \cite[Appendix C]{ammari2017subwavelength}), we show the edge modes corresponding to the vectors $v_1$ and $v_2$ defined in (\ref{v1v2}). Because of the symmetry of the unmodulated structure, these edge modes are localized both on the right and  the left of the structure. 

\begin{figure}[H]
\begin{subfigure}[b]{0.45\textwidth}
\centering
    \includegraphics[width=\textwidth]{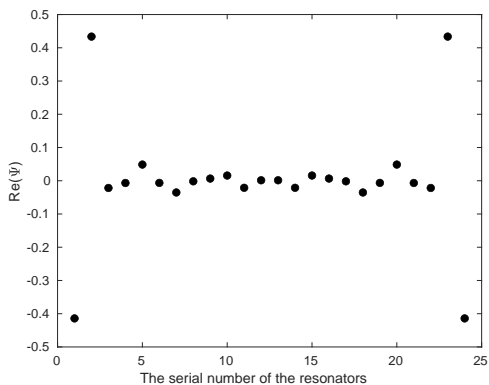}
    \caption{The first edge mode corresponding to $v_1$ as in Theorem \ref{thm:main_degenerate}.}
    \label{fig:supercell_edge1}    
\end{subfigure}
     \hfill
\begin{subfigure}[b]{0.45\textwidth}
\centering
    \includegraphics[width=\textwidth]{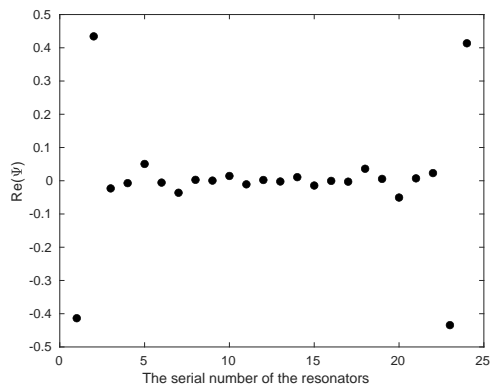}
    \caption{The second edge mode corresponding to $v_2$ as in Theorem \ref{thm:main_degenerate}.}
    \label{fig:supercell_edge2}    
\end{subfigure}
     \hfill
    \caption{Edge mode simulations using the multipole expansion method verifying the results proved in Theorem \ref{thm:main_degenerate}.}
    \label{figexpm}
\end{figure}
\end{example}
\begin{rmk}
As seen in Remark \ref{rmk:freq}, if $v$ is an eigenvector of the capacitance matrix $C$ associated with the eigenvalue $\lambda$,  then $\begin{pmatrix}
v \\ {\mu} v
\end{pmatrix}$ is an eigenvector of $A_{\varepsilon}$ associated with the eigenvalue $${\mu}=\sqrt{-\frac{\delta \kappa_r}{\rho_r}\lambda}.$$ Observe that the eigenvectors $v_1$ and $v_2$ of the capacitance matrix of the supercell structures corresponding to the approximately degenerate eigenvalue $\lambda$.  They take their maximum or minimum values on the boundary of the structure. Hence, we refer to $\mu$ as the edge degenerate subwavelength quasifrequency and to $v_{1}$ and $v_2$ as the corresponding edge modes.  
\end{rmk}

\begin{rmk}
Now,  we turn to Example \ref{example_topo}. We first plot the bandgap structure of a supercell repeated periodically along the $x$-axis. Then we set 
$\Omega=0.2$, $\varepsilon=0.2$, and the phase shifts for the unit supercell to be given by $\phi_1=2\pi/3,\phi_2=4\pi/3,\phi_3 = 0,\phi_4 = 2\pi/3,\phi_5 =0, \phi_6=4\pi/3$. Note that in the unit supercell the first and second trimers are modulated with  opposite orientations. Figure  \ref{fig:supercell_dirac} shows the Dirac degeneracies (at the symmetry points $\pm \pi/L$) and their non-reciprocal openings due to time-modulations of this infinite periodic supercell structure. 
This non-reciprocal bandgap opening by time-modulating the supercell structure is due to broken time-reversal symmetry; see \cite{paper1}.

\begin{figure}[H]
\begin{subfigure}[b]{0.45\textwidth}
\centering
    \includegraphics[width=\textwidth]{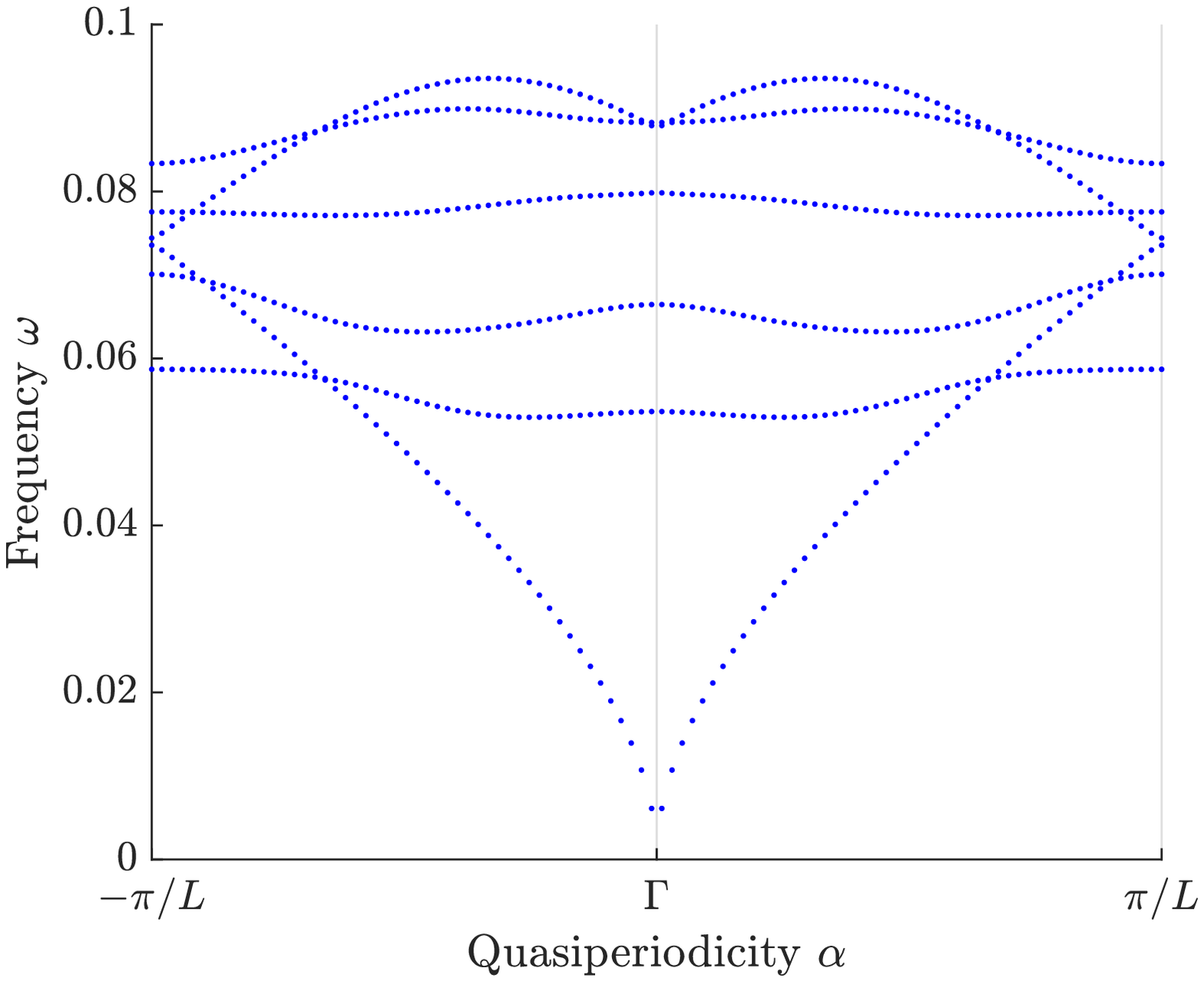}
    \caption{Dirac degeneracies for the unmodulated supercell structure.}
    \label{fig:supercell_dirac_structure}    
\end{subfigure}
     \hfill
\begin{subfigure}[b]{0.45\textwidth}
\centering
  \includegraphics[width=\textwidth]{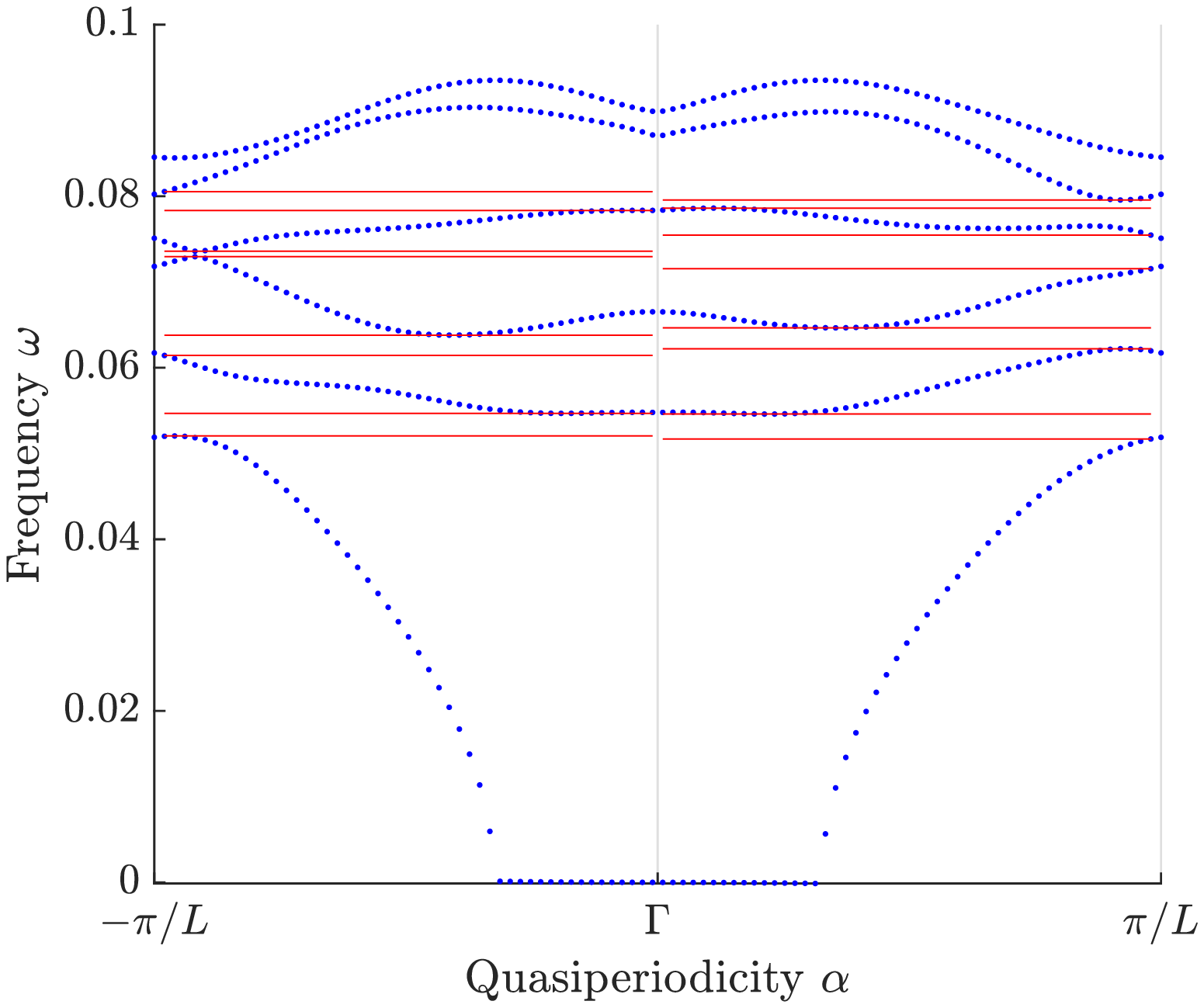}
    \caption{Non-reciprocal bandgap opening by time-modulating the supercell structure.}
    \label{fig:supercell_dirac_open}    
\end{subfigure}
     \hfill
    \caption{Dirac degeneracies and their non-reciprocal openings due to time-modulations of an infinite periodic supercell structure.} \label{fig:supercell_dirac}
\end{figure}
\end{rmk}
\begin{rmk}
In Figures \ref{fig:supercells_unmod} and \ref{fig:supercells_mod} we make an interface between two supercell structures by applying a mirror symmetry. In both the unmodulated and modulated cases, we can see that they are three edge modes, one in the middle and two on the side edges. By turning on the time-modulations,  all three edge modes are stable. In particular, the two side edge modes do not split.

\begin{figure}[H]
\centering
\includegraphics[scale=0.6]{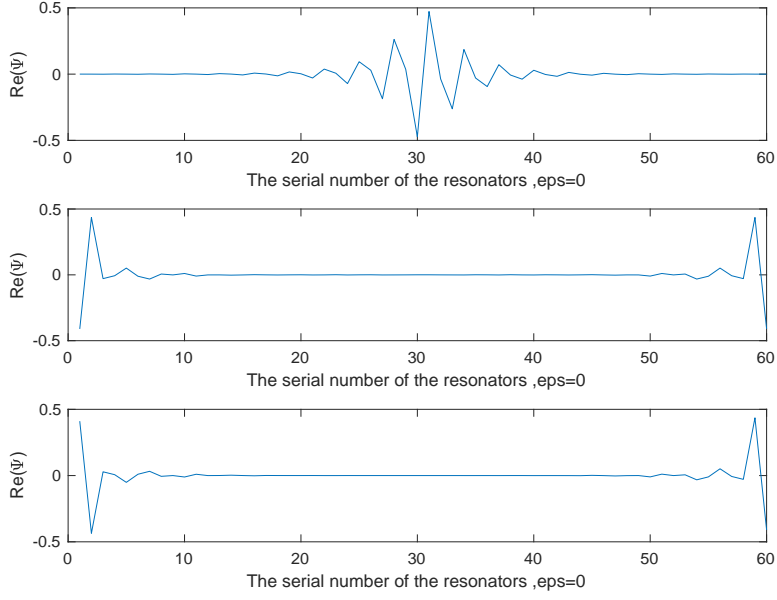}
\caption{Edge modes in mirror symmetric supercell structure in the unmodulated case.}
\label{fig:supercells_unmod}
\end{figure}

\begin{figure}[H]
\centering
\includegraphics[scale=0.6]{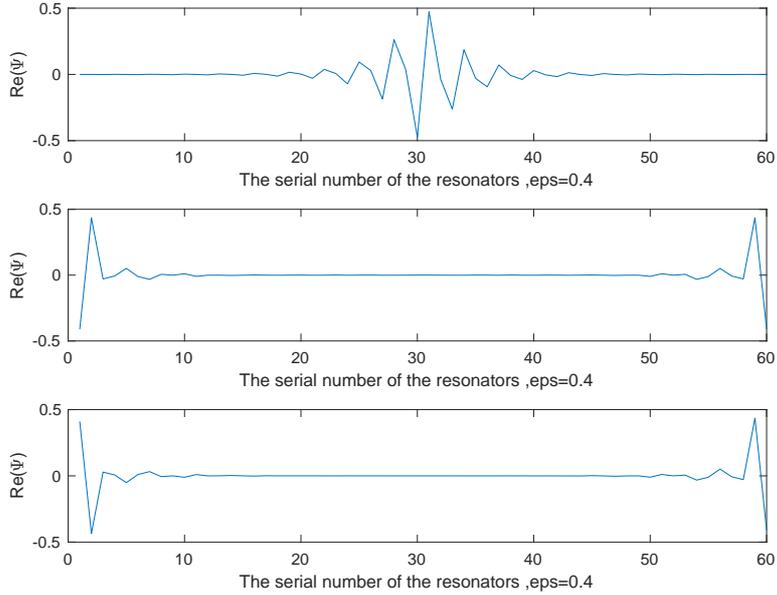}
\caption{Edge modes in mirror symmetric supercell structure in the modulated case with $\Omega=0.2$ and $\varepsilon=0.4$.}
\label{fig:supercells_mod}
\end{figure}

\end{rmk}

\subsection{Localization of edge modes in the time-modulated case}
In this subsection, we prove that the degenerate edge eigenspace, under time modulation, splits into two one-dimensional eigenspaces described by an analytical formula. We numerically verify that these one-dimensional eigenspaces  correspond to the left and right edge modes.
By turning on the time-modulation with amplitude $\varepsilon$, the first order Hill's equation (\ref{firstorderHill}) becomes a Hill equation with time-dependent coefficients:
\begin{equation} \label{4.25}
\frac{\mathrm{d}y(t)}{\mathrm{d}t} = A_\varepsilon(t)y(t).
\end{equation} 
Applying Floquet theorem to the ODE (\ref{4.25}), its fundamental solution can be written as
\begin{equation*}
X(t) = P_\varepsilon(t)\mathrm{exp}(F_\varepsilon t).
\end{equation*}
Here, $F_\varepsilon$ can be expanded in terms of $\varepsilon$  as follows:
\begin{equation*}
F_\varepsilon = F_0 + \varepsilon F_1 +\varepsilon^2 F_2 +O(\varepsilon^3).
\end{equation*}
We have proved in the last section that, in the dilute regime, $F_0$ has a degenerate eigenvalue ${\nu_0}$ with multiplicity $2$. Denote the two-dimensional eigenspace by $E_{\nu_0}=\mathrm{span}\{v_1,v_2\}$. By turning on the-modulation, $E_{\nu_0}$  splits into two one-dimensional eigenspaces corresponding to the eigenvalues:
\begin{equation*}
\begin{split}
&\nu_w = \nu_0 + \varepsilon \nu^w_1 + \varepsilon^2 \nu^w_2 +O(\varepsilon^3),\\
&\nu_q = \nu_0 + \varepsilon \nu^q_1 + \varepsilon^2 \nu^q_2 +O(\varepsilon^3),
\end{split}
\end{equation*}
which are spanned by the eigenvectors $w$ and $q$ given by
\begin{equation*}
\begin{split}
&w = w_0  + \varepsilon w_1 + \varepsilon^2 w_2 +O(\varepsilon^3),\\
&q = q_0  + \varepsilon q_1 + \varepsilon^2 q_2 +O(\varepsilon^3),
\end{split}
\end{equation*}
where 
\begin{equation} \label{4.30} 
w_0=\beta^w_1 v_1+\beta^w_2 v_2\in E_{\nu_0} \quad \mbox{and} \quad  q_0=\beta^q_1 v_1+\beta^q_2 v_2\in E_{\nu_0}. \end{equation}
Our second main result is stated in the following  theorem. 
\begin{thm}
    The perturbation due to time-modulation of the edge degenerate eigenvalue $\nu$  is at least of order two in the modulation amplitude $\varepsilon$, i.e., 
    \begin{equation*}
    \nu_1^w = \nu_1^q = 0.
    \end{equation*}
\end{thm}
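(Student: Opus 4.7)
The plan is to reduce the statement to an application of the degenerate case of Theorem~\ref{thm:expansionFloquet}, and then to exploit the fact that the first-order time-dependent perturbation has zero time average.

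First, since Theorem~\ref{thm:expansionFloquet} requires that the unperturbed part be diagonal, I would change basis so that $A_0$ becomes diagonal. Let $P$ be an invertible matrix such that $\tilde A_0 := P^{-1} A_0 P$ is diagonal, and set $\tilde A_\varepsilon(t) := P^{-1} A_\varepsilon(t) P = \tilde A_0 + \varepsilon \tilde A_1(t) + \varepsilon^2 \tilde A_2(t) + \ldots$ The Floquet matrix $F_\varepsilon$ in this basis has the same spectrum as before, so the eigenvalue expansions $\nu_w, \nu_q$ are unchanged; in particular the coefficients $\nu_1^w, \nu_1^q$ are determined by the $2\times 2$ block of $F_1$ restricted to the (image of the) degenerate eigenspace $E_{\nu_0}$.

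The key observation is that the time average of $\tilde A_1$ vanishes. From the expansion (\ref{eq:expansionMij}) with $\kappa_i$ constant (so that $W_3 \equiv 0$), the first-order-in-$\varepsilon$ coefficient of each entry of $M(t)$ is a linear combination of $\cos(\Omega t + \phi_i)$ and $\cos(\Omega t + \phi_j)$, both of which have zero mean. Hence $M_1^0 = 0$ entrywise, which via the block form
\begin{equation*}
A_1(t) = \begin{pmatrix} 0 & 0 \\ -M_1(t) & 0 \end{pmatrix}
\end{equation*}
yields $A_1^0 = 0$ and therefore $\tilde A_1^0 = P^{-1} A_1^0 P = 0$.

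Now I apply the first line of formula (\ref{eq:firstorderFloquetmatrix}). For any pair of indices $(k,l)$ corresponding to the degenerate eigenvalue $\nu_0$, the diagonal entries of $\tilde A_0$ coincide, so their folding numbers are equal, $n_k = n_l$. The formula then gives
\begin{equation*}
(F_1)_{k,l} = (\tilde A_1^{\,n_k - n_l})_{k,l} = (\tilde A_1^{\,0})_{k,l} = 0.
\end{equation*}
Thus the $2\times 2$ restriction of $F_1$ to the degenerate block vanishes identically, and standard degenerate perturbation theory forces both first-order corrections $\nu_1^w$ and $\nu_1^q$ (the eigenvalues of this zero matrix) to vanish.

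The potentially subtle point is ensuring that the restriction to $E_{\nu_0}$ is indeed captured by the $(k,l)$ entries of $F_1$ in the chosen basis and that the folding numbers agree. The latter is automatic since the two degenerate eigenvalues literally coincide (not merely modulo $\Omega$), so no adjustment is required; the former is exactly what degenerate perturbation theory supplies. All other steps reduce to the elementary observation that $\cos(\Omega t + \phi)$ has zero mean, so no serious calculation is needed beyond the structural expansion (\ref{eq:expansionMij}).
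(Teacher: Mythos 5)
Your proposal is correct and follows essentially the same route as the paper's proof: both arguments use the first case of formula (\ref{eq:firstorderFloquetmatrix}), the fact that the two degenerate quasifrequencies share the same folding number so that $n_k-n_l=0$, and the vanishing time-average of the first-order perturbation $A_1(t)$ (since the entries of $M_1(t)$ are combinations of $\cos(\Omega t+\phi_i)$, which have zero mean) to conclude that $PF_1P$ is the $2\times 2$ zero matrix. You are merely a bit more explicit than the paper about the diagonalizing change of basis and about why $A_1^0=0$.
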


\begin{proof}
Theorem \ref{thm:main_degenerate} provides an (asymptotically) degenerate eigenvalue $\lambda$ of the capacitance matrix. By Remark \ref{rmk:freq}, the quasifrequencies  in the static case for the ODE (\ref{ode}) are given by the square roots of the eigenvalues of the capacitance matrix \cite{ammari2021functional}. They have the same folding number in terms of Definition \ref{foldingnumber}. Hence, by Theorem \ref{thm:expansionFloquet} and the eigenvalue perturbation theory (see Appendix \ref{appendixA}), the first-order term $PF_1P$ is a $2\times 2$ zero matrix. This is indeed because $A_1$ does not have constant Fourier coefficients as derived in (\ref{eq:expansionMij}). Thus we conclude that the first-order perturbation is zero and therefore the perturbation due to time-modulation of the edge degenerate eigenvalue $\nu$  is at least of order two.
\end{proof}

The coefficients in (\ref{4.30}) are determined as follows. 
\begin{thm}
\label{thm:localisation}
Denote $P$ the projection operation onto the eigenspace $E_{\nu_0}$ and let $B$ be defined by \begin{equation*}
    B= (PF_1GF_1P+PF_2P)|_{E_{\nu_0}},
\end{equation*}
and $G:=(\nu \, \text{Id} -F_0)^{-1}(1-P)$. Then the following matrix equations hold:
\begin{equation*}
\begin{split}
B \begin{pmatrix}
\beta^w _1 \\ \beta^w _2
\end{pmatrix} & = \nu_2^w \begin{pmatrix}
\beta^w _1 \\ \beta^w _2
\end{pmatrix},\\
B \begin{pmatrix}
\beta^q _1 \\ \beta^q _2
\end{pmatrix} & = \nu_2^q \begin{pmatrix}
\beta^q _1 \\ \beta^q _2
\end{pmatrix}.\\
\end{split}
\end{equation*}
\end{thm}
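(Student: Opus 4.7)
The plan is to carry out a standard degenerate Rayleigh--Schr\"odinger perturbation expansion for the eigenvalue problem $F_\varepsilon w = \nu_w w$ on the two-dimensional eigenspace $E_{\nu_0}$ and to read off the effective $2\times 2$ operator. I would substitute the $\varepsilon$-expansions of $F_\varepsilon$, $w$ and $\nu_w$ into $F_\varepsilon w=\nu_w w$ and collect coefficients of like powers of $\varepsilon$. Order $\varepsilon^0$ gives $(F_0-\nu_0)w_0=0$, i.e. $w_0 \in E_{\nu_0}$, so indeed $w_0 = \beta_1^w v_1 + \beta_2^w v_2$. Orders $\varepsilon^1$ and $\varepsilon^2$ give, respectively,
\begin{equation*}
(F_0-\nu_0)w_1 = -F_1 w_0 + \nu_1^w w_0,
\qquad
(F_0-\nu_0)w_2 = -F_1 w_1 - F_2 w_0 + \nu_1^w w_1 + \nu_2^w w_0.
\end{equation*}

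Next, I would exploit the preceding theorem, which states $\nu_1^w = \nu_1^q = 0$ and, at the matrix level, $PF_1P\big|_{E_{\nu_0}} = 0$. Applying $(1-P)$ to the first-order equation and inverting $\nu_0-F_0$ on the complement of $E_{\nu_0}$ by means of the reduced resolvent $G = (\nu_0\mathrm{Id}-F_0)^{-1}(1-P)$ would yield $(1-P)w_1 = G F_1 w_0$. The component $Pw_1 \in E_{\nu_0}$ is free; it does not contribute to the eigenvalue condition at this order because $PF_1P\big|_{E_{\nu_0}}=0$, so we may set it to zero without loss of generality.

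Then I would project the second-order equation by $P$. The left-hand side vanishes since $P$ commutes with $F_0$ and $PE_{\nu_0}\subseteq \ker(F_0-\nu_0)$. Using $\nu_1^w=0$, $w_0 = Pw_0$, and the just-derived expression for $(1-P)w_1$, the right-hand side collapses to
\begin{equation*}
\nu_2^w w_0
= PF_1 w_1 + PF_2 w_0
= PF_1 G F_1 P w_0 + PF_2 P w_0
= B\, w_0,
\end{equation*}
because $PF_1 P w_1 = 0$ (again by vanishing of $PF_1P$ on $E_{\nu_0}$). Writing $w_0 = \beta_1^w v_1 + \beta_2^w v_2$ in the basis $\{v_1,v_2\}$ of $E_{\nu_0}$ transcribes this eigenvalue equation into the announced $2\times 2$ matrix equation for $(\beta_1^w,\beta_2^w)^\top$. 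The analogous argument with $q$ in place of $w$ gives the second matrix equation.

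The main technical obstacle is justifying that $G$ is well defined, i.e. that $\nu_0$ is a semisimple eigenvalue of $F_0$ of algebraic multiplicity exactly two and that $F_0-\nu_0$ is invertible on $\mathrm{ran}(1-P)$. Since Theorem \ref{thm:main_degenerate} gives only an \emph{asymptotic} degeneracy with error $O(\gamma d)$, I would treat the residual splitting as a small additional perturbation absorbable into the $\varepsilon$-expansion, or equivalently work on the two-dimensional spectral subspace associated with the corresponding spectral projection (well defined by a Riesz projection along a small contour enclosing the near-degenerate pair). Doing so keeps $G$ bounded uniformly in $\gamma$, and the formal manipulation above becomes rigorous.
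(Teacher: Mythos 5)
Your proposal is correct and follows essentially the same route as the paper: the paper's proof (Appendix A) is exactly the standard second-order degenerate Rayleigh--Schr\"odinger expansion, solving for the component $Qv=(1-P)v$ via the reduced resolvent $G$, substituting back, and reading off the effective Hamiltonian $P(F_1GF_1+F_2)P$ on $E_{\nu_0}$ once the first-order block $PF_1P$ vanishes. Your closing remark about justifying $G$ via a Riesz projection around the near-degenerate pair is a point the paper leaves implicit, but it does not change the argument.
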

\begin{proof}
The proof is given in Appendix \ref{appendixA}.
\end{proof}
\begin{example}
Here, we continue investigating Example \ref{exp:statics} where $L=4$.
 We implement an algorithm to compute the Floquet matrix elements as in Theorem \ref{thm:expansionFloquet} and plug them into the analytical formula given in Theorem \ref{thm:localisation} to determine $w_0$ and $q_0$. Furthermore, we compute the first-order perturbation in the eigenvectors $w_1$ and $q_1$ using equations outlined in (\ref{eq:firstordervector}). Setting $w_{\text{red}}=w_0+\varepsilon w_1$, we plot it in Figure \ref{fig:supercell_edge11} against the Bloch mode $w_{\text{black}}$ obtained by using the multipole expansion method. Moreover,  we do the same for $q$ in Figure \ref{fig:supercell_edge21}. The modulation amplitude is set to be $\varepsilon=0.2$, the modulation frequency $\Omega = 2$ and the phase shifts  as those in Figure \ref{fig:supercell_dirac}. As defined in (\ref{modulation_parameters}), they are given by $\phi_1=2\pi/3,\phi_2=4\pi/3,\phi_3 = 0,\phi_4 = 2\pi/3,\phi_5 =0, \phi_6=4\pi/3$ for the first cell and then repeated periodically as in (\ref{periodicstructure}). We emphasize that the error term is due to higher-order effects in $\varepsilon$.
\begin{figure}[H]
\begin{subfigure}[b]{0.45\textwidth}
\centering
    \includegraphics[width=\textwidth]{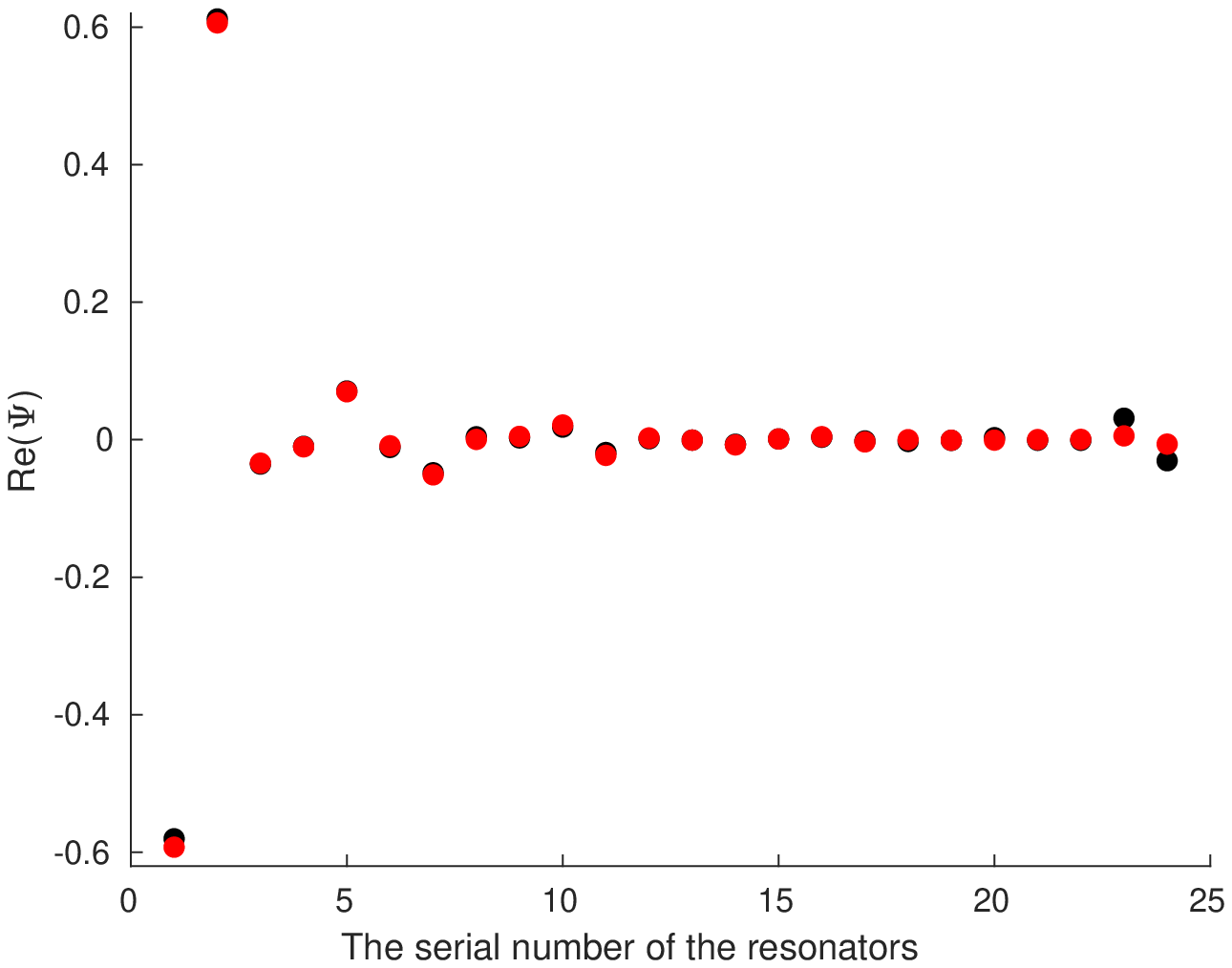}
    \caption{The first edge mode comparison.}
    \label{fig:supercell_edge11}    
\end{subfigure}
     \hfill
\begin{subfigure}[b]{0.45\textwidth}
\centering
    \includegraphics[width=\textwidth]{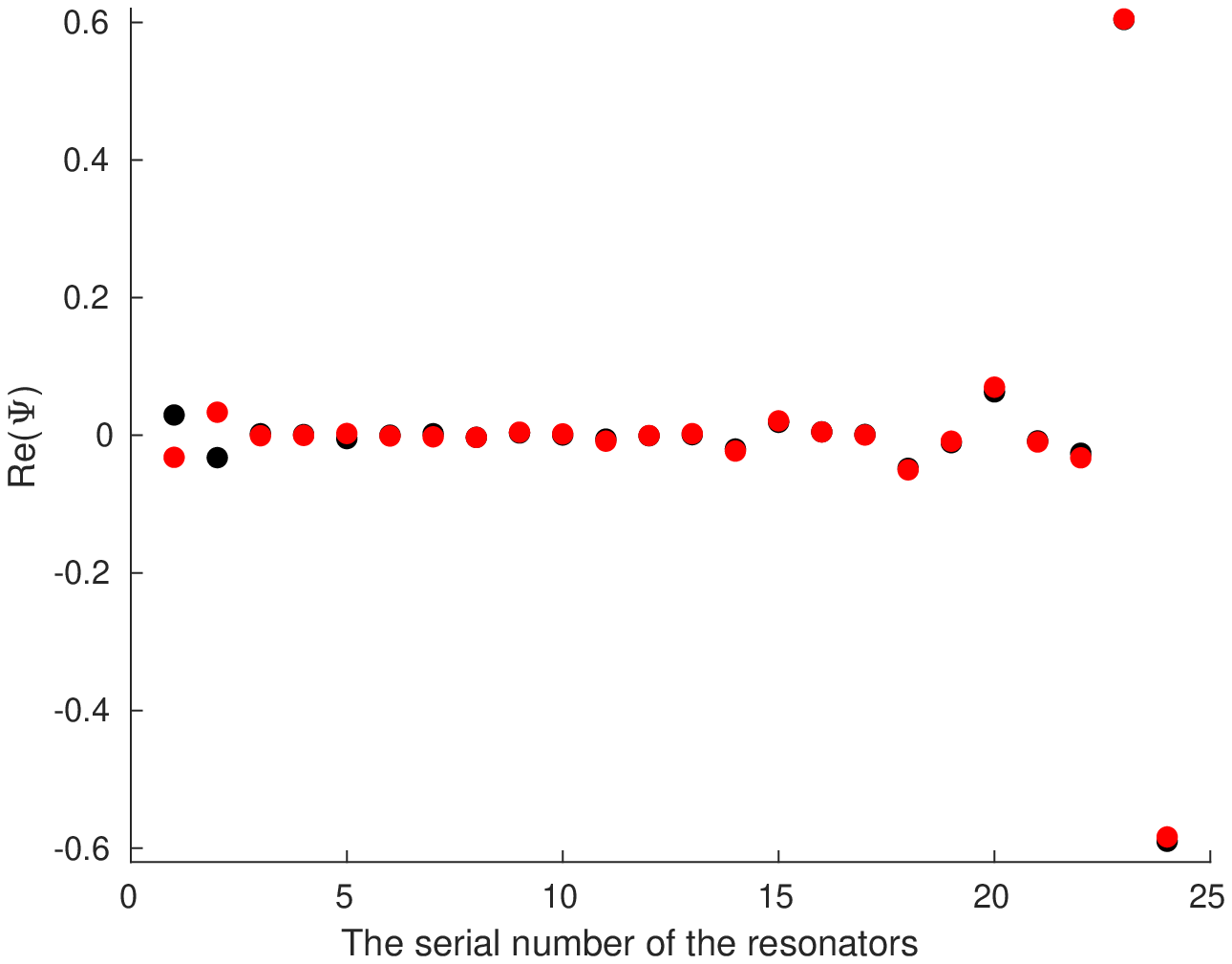}
    \caption{The second edge mode comparison.}
    \label{fig:supercell_edge21}    
\end{subfigure}
     \hfill
    \caption{Edge mode simulations using the multipole expansion method (in black) compared with those predicted asymptotically by Theorem \ref{thm:localisation} (in red).}
\end{figure}
\end{example}

As shown in Figures \ref{fig:supercell_edge11}  and \ref{fig:supercell_edge21}, the modes in the unmodulated case computed in Figures \ref{fig:supercell_edge1} and \ref{fig:supercell_edge2} split under time-modulation into two unidirectional edge modes, each of them is localized at one of the two edges of the structure. 

\subsection{Numerical simulation of the robustness of the edge modes}
In this subsection, we discuss the robustness of the edge modes in the supercell structures with respect to the modulation amplitude $\varepsilon$. Using numerical simulations, we demonstrate stability of the edge modes in the case of large $N$ and small perturbations in the modulation amplitude $\varepsilon$.
\begin{example}
In this example, we set $N$ to be $120$, i.e., we consider a structure with  $20$ supercells. In Figure \ref{f7}, we illustrate the localization effect as described in the previous sections. 

\begin{figure}[H]
\begin{subfigure}[b]{0.45\textwidth}
\centering
    \includegraphics[width=\textwidth]{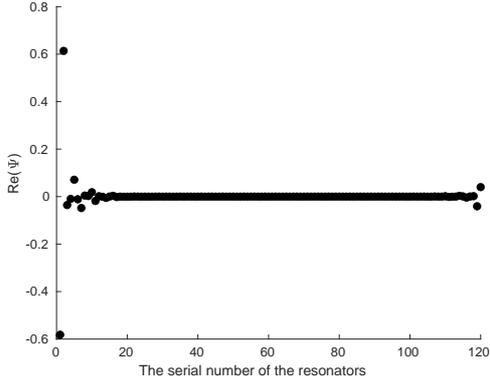}
    \caption{The first edge mode localized on the left.}
    \label{fig:supercell_edge11b}    
\end{subfigure}
     \hfill
\begin{subfigure}[b]{0.45\textwidth}
\centering
    \includegraphics[width=\textwidth]{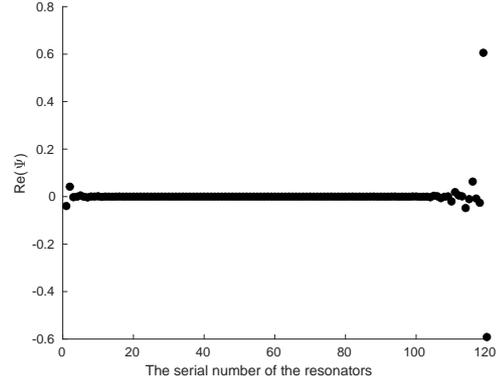}
    \caption{The second edge mode localized on the right.}
    \label{fig:supercell_edge21b}    
\end{subfigure}
     \hfill
    \caption{Edge mode simulations using the multipole expansion method with $N=120$.}
    \label{f7}
\end{figure}
\end{example}

\begin{example}
In this example, we investigate the robustness of the edge modes when adding an error term to the modulation amplitude $\varepsilon=0.2$. For each mean value $\mu$, we generate a normally distributed random array $e_\mu$ with mean $\mu$. We perform numerical simulations on the supercell structure with modulation amplitude $(0.2,\ldots,0.2)+e_\mu$ to obtain the edge modes. After repeating this $1000$ times, we take the average of all the $1000$ edge modes. We connect the values in the visualizations to demonstrate the differences. Figure \ref{f8} shows that the edge modes we observe are stable under small perturbations in the modulation amplitude.
\begin{figure}[H]
\begin{subfigure}[b]{0.45\textwidth}
\centering
    \includegraphics[width=\textwidth]{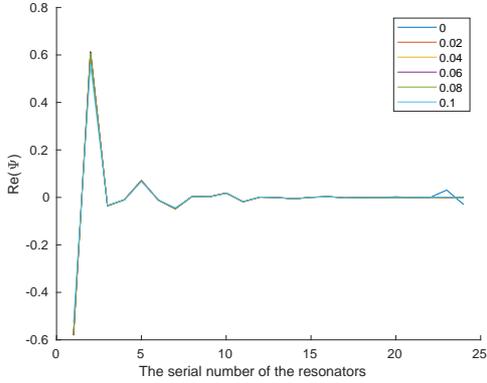}
    \caption{The first edge mode localized on the left.}
    \label{fig:supercell_edge11c}    
\end{subfigure}
     \hfill
\begin{subfigure}[b]{0.45\textwidth}
\centering
    \includegraphics[width=\textwidth]{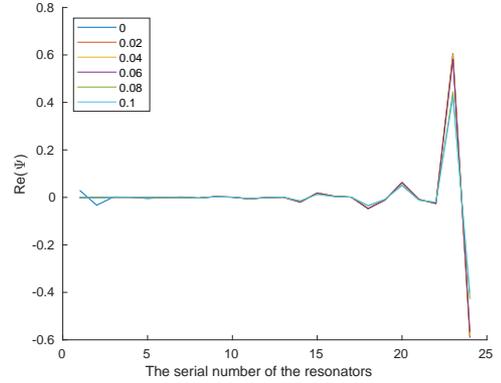}
    \caption{The second edge mode localized on the right.}
    \label{fig:supercell_edge21c}    
\end{subfigure}
     \hfill
    \caption{Edge mode simulations using the multipole expansion method with Gaussian error term in the modulation amplitude. The legend marks the different mean values of the error term.} \label{f8}
\end{figure}
\end{example}

\appendix 
\section{Eigenvalue and eigenvector perturbation theory up to second-order} \label{appendixA}
Assume that $F=F_0+\varepsilon F_1+\varepsilon^2 F_2 + O(\varepsilon^3 )$ and $F_0$ is diagonal with respect to the basis vectors $w_1,\ldots,w_N$.  We would like to expand the eigenvalues of $F$ in terms of $\varepsilon$. This is a typical problem in perturbative quantum theory \cite{perturbation}. Similar formulas in quantum mechanical perturbation theory can be found in textbooks such as \cite{qmimperial}. The following derivation is reformulated to suit our setting. We focus on the perturbation of degenerate points. Let $f_0$ be a degenerate point of multiplicity $r$ and let  $w_1,\ldots, w_r$ be its associated eigenvectors. Without loss of generality, we assume that $(F_0)_{i,i} = f_0$ for $i=1,\ldots,r$. We define the projection operator
\begin{equation*}
	P:=\begin{pmatrix}
		\text{Id}_r & \\ & 0
	\end{pmatrix}  \quad \text{and let } \ Q:=\text{Id}_N-P.
\end{equation*}

We remark that $F_0$ commutes with $P$ and $Q$. Now, we fix an eigenvector $v_0\in\text{span}\{w_1,\ldots,w_N\}$ and expand $v$ and $f$ as follows
\begin{equation*}
	\begin{split}
		v&=v_0+\varepsilon v_1+\varepsilon^2 v_2+O(\varepsilon^3),\\
		f&=f_0+\varepsilon f_1+\varepsilon^2 f_2+O(\varepsilon^3).
	\end{split}
\end{equation*}
We require first that $v_0 = Pv$, due to the normalization of $v$. From $Fv=fv$,  it follows that up to $O(\varepsilon^2)$ 
\begin{equation}
	\label{Qequation}
	\begin{split}
		& F_0v+\varepsilon(F_1+\varepsilon F_2)v=fv,\\
		& QF_0v+\varepsilon QVv=fQv,\\
		& Q(f \text{Id} -F_0)v=\varepsilon QVv \text{ and } \\
		& Qv =\varepsilon ((f \, \text{Id} -F_0)^{-1}Q)Vv,\end{split}	
\end{equation}
where $V:= F_1 + \varepsilon F_2$. Note that we should treat $((f \, \text{Id} -F_0)^{-1}Q)$ as $0|_{E_{f_0}}\oplus ((f \, \text{Id} -F_0)^{-1}Q)|_{E_{f_0}^c}$, where ${E_{f_0}}$ denotes the eigenspace associated with $f_0$ and $E_{f_0}^c$ is its complementary. Similarly, we obtain that
\begin{equation*}
	PF_0v+\varepsilon PVv = fPv,
\end{equation*}
and therefore,
\begin{equation}
	\label{Pequation}
	f_0Pv+\varepsilon PVv = fPv, 
\end{equation}
where we have used that $PF_0v=F_0Pv=f_0Pv$. Now, we insert $v=Pv+Qv$ into the second term of the left-hand side of (\ref{Pequation}) and derive from $f_0Pv+\varepsilon PV(Pv+Qv)=fPv$ the following two identities:
\begin{equation}
	\label{master}
	\begin{split}
		& f_0Pv+\varepsilon PVPv+\varepsilon PVQv=fPv\text{ and }\\
		& f_0Pv+ \varepsilon PVPv+ \varepsilon^2PV\left((f \, \text{Id} -F_0)^{-1}Q\right)Vv=fPv.
	\end{split}
\end{equation}
For the $\varepsilon^2$-term, we evaluate the expression at $\varepsilon=0$:
\begin{equation}
	\label{hocus}
	PV((f\, \text{Id} -F_0)^{-1}Q)Vv|_{\varepsilon=0}=PF_1\left((f_0 \, \text{Id} -F_0)^{-1}Q\right)F_1v_0:=PF_1GF_1v_0,
\end{equation}
where $G:=(f_0 \, \text{Id} -F_0)^{-1}Q=\text{diag}(0,\ldots,0,(f_0-\lambda_2)^{-1},\ldots,(f_0-\lambda_k)^{-1})$ if we assume that $F_0=\text{diag}(f_0,\ldots,f_0,\lambda_2,\ldots,\lambda_k)$. Hence, we can write that
\begin{equation}
	\label{effectiveH}	
	P\left(f_0 \, \text{Id} +\varepsilon (F_1+\varepsilon F_2)+\varepsilon^2(PF_1GF_1)\right)Pv_0=fv_0.
\end{equation}
With the so-called effective Hamiltonian:
\begin{equation}
	\label{effective}
	\mathcal{H}:=Pf_0P+\varepsilon PF_1P +\varepsilon^2P(F_1GF_1+F_2)P	,
\end{equation}
we can obtain $r$ perturbed eigenvalues up to order $\varepsilon^2$, if we know the form of $F_1$ and $F_2$.\\
Next, we derive the eigenvector perturbation. To that end, we seek a linear combination  $v_0=\beta_1 w_1+\ldots+\beta_2 w_r$ such that equation \eqref{effectiveH} holds. We can read off from the first-order in $\varepsilon$ to derive:
\begin{equation}
    \label{firsteigenvector}
    PF_1P v_0 =f_1v_0.
\end{equation}
For the case where $f_1=0$, we look at the second-order to obtain:
\begin{equation}
    \label{secondeigenvector}
    P(F_1GF_1+F_2)Pv_0 = f_2v_0.
\end{equation}
Furthermore, coming back to the equation:
\begin{equation}
\label{eq:expansion}
    (F_0 + \varepsilon F_1 + \varepsilon^2 F_2 )(v_0 + \varepsilon v_1 + \varepsilon^2 v_2 )=(f_0 + \varepsilon f_1 +\varepsilon^2 f_2) (v_0 + \varepsilon v_1 + \varepsilon^2 v_2)+ O(\varepsilon^3),
\end{equation}
we can derive from the first-order term that
\begin{equation}
        F_0v_1+F_1v_0=f_0v_1+f_1v_0,\\
\end{equation}
which is equivalent to 
\begin{equation}
\label{eq:firstorder}
    (F_0-f_0\mathrm{Id})v_1 = - (F_1-f_1\mathrm{Id})v_0.
\end{equation}
This relates $v_1$ and $v_0$ as follows
\begin{equation}
\label{eq:firstordervector}
    v_1 = G(f_1\mathrm{Id}-F_1)v_0.
\end{equation}

\bibliographystyle{abbrv}
\bibliography{paper_edge}
\end{document}